\newcommand{\ass}{\quad\mbox{as}\quad}
\newcommand{\EE}{{\mathcal E}  }
\newcommand{\inn}{{\quad\hbox{in } }}
\newcommand{\onn}{{\quad\hbox{on } }}
\newcommand{\ttt}{\tilde }
\newcommand{\TT}{{\mathcal T}  }
\newcommand{\nn}{ {\nabla}  }
\newcommand{\pp}{ {\partial} }
\newcommand{\vp}{\varphi}
\newcommand{\RR}{{{\mathcal R}}}
\newcommand{\C}{{\mathbb C}}
\newcommand{\R} {\mathbb R}
\newcommand{\Z} {\mathbb Z}
\newcommand{\cuad}{{\sqcap\kern-.68em\sqcup}}
\newcommand{\DD}{{\mathcal D}}
\newcommand{\Rem}{{\mathcal R}_0}
\newcommand{\KK}{{\mathcal K}}
\newcommand{\foral}{\quad\mbox{for all}\quad}
\newcommand{\be}{\begin{equation}}
\newcommand{\ee}{\end{equation}}
\newcommand{\la}{\lambda}
\newcommand{\equ}[1]{(\ref{#1})}
\renewcommand{\Re}{\mathop{\rm Re}}
\renewcommand{\Im}{\mathop{\rm Im}}
\renewcommand{\div}{\mathop{\rm div}}
\newcommand{\curl}{\mathop{\rm curl}}
\newtheorem{lemma}{Lemma}[section]
\newtheorem{prop}{Proposition}[section]
\newtheorem{theorem}{Theorem}
\newtheorem{remark}{Remark}[section]
\newtheorem{ch}{Check}
\newcommand{\bremark}{\begin{remark} \em}
\newcommand{\eremark}{\end{remark} }
\long\def\hide#1{}
\long\def\noanot#1{}
\definecolor{redd}{RGB}{200,0,0}
\definecolor{mo1}{rgb}{0.8,0,0.8}
\definecolor{green1}{rgb}{0.1,0.7,0.1}
\definecolor{g2}{rgb}{0,0.5,0}
\long\def\elim#1{{\color{red} ELIMINAR\\ #1}}
\long\def\elim#1{}
\numberwithin{equation}{section}
\title[Blow-up in the axially symmetric harmonic map flow]{Blow-up for the 3-dimensional axially symmetric   harmonic map flow into $S^2$}
\author[J. Davila]{Juan Davila}
\address{\noindent
Instituto de Matem\'aticas, Universidad de Antioquia, Calle 67, No. 53--108, Medell\'\i n, Colombia,
and  Departamento de
Ingenier\'{\i}a  Matem\'atica-CMM   Universidad de Chile,
Santiago 837-0456, Chile}
\email{jdavila@dim.uchile.cl}
\author[M. del Pino]{Manuel del Pino}
\address{\noindent   Department of Mathematical Sciences University of Bath,
Bath BA2 7AY, United Kingdom \\
and  Departamento de
Ingenier\'{\i}a  Matem\'atica-CMM   Universidad de Chile,
Santiago 837-0456, Chile}
\email{m.delpino@bath.ac.uk}
\author[C. Pesce]{Catalina Pesce}
\address{\noindent  Departamento de
Ingenier\'{\i}a  Matem\'atica-CMM   Universidad de Chile,
Santiago 837-0456, Chile}
\email{catalina.pesce.r@ing.uchile.cl}
\author[J. Wei]{Juncheng Wei}
\address{\noindent
Department of Mathematics,
University of British Columbia, Vancouver, B.C., Canada, V6T 1Z2}
\email{jcwei@math.ubc.ca}
\begin{document}

\begin{abstract}
We construct finite time blow-up solutions to the 3-dimensional harmonic map flow into the sphere $S^2$,
\begin{align*} u_t & = \Delta u + |\nabla u|^2 u \quad \text{in } \Omega\times(0,T)
\\
u &= u_b \quad \text{on } \partial \Omega\times(0,T)
\\
u(\cdot,0) &= u_0 \quad  \text{in } \Omega  ,
\end{align*}
with $u(x,t): \bar \Omega\times [0,T) \to S^2$.
Here $\Omega$ is a bounded, smooth axially symmetric domain in $\mathbb{R}^3$.
We prove that for any circle $\Gamma \subset \Omega$ with the same axial symmetry,  and any sufficiently small $T>0$ there exist  initial and boundary conditions such that $u(x,t)$ blows-up exactly at time $T$ and precisely on the curve $\Gamma$, in fact
$$
|\nn u(\cdot ,t)|^2 \rightharpoonup  |\nn u_*|^2   +  8\pi \delta_\Gamma \ass t\to T .
$$
for a regular function $u_*(x)$, where $\delta_\Gamma$  denotes the Dirac measure supported on the curve. This the first example of a blow-up solution with a space-codimension 2 singular set, the maximal dimension predicted in the partial regularity theory by Chen-Struwe and Cheng \cite{chen-struwe,cheng}.

\end{abstract}

\maketitle


\section{Introduction and main result}

Let $\Omega$ be a bounded domain  in $\R^3$ with smooth boundary $\pp\Omega$.  We denote by $S^2$ the standard 2-sphere.  We consider the {\em harmonic map flow}
for maps from $\Omega$ into $S^2$, given by the semilinear parabolic equation
\be
\label{har flow0} \left \{ \begin{aligned}
u_t = \Delta u + |\nabla u|^2 u \quad &\text{in } \Omega\times(0,T)\\
u = u_b  \quad &\text{on } \pp\Omega\times(0,T)\\
u(\cdot,0) = u_0  \quad & \text{in } \Omega
\end{aligned}\right.\ee
for a function $u:\Omega \times [0,T)\to S^2$. Here $u_0:\bar \Omega \to S^2$ is a given smooth map and  $\vp= u_0\big|_{\pp\Omega}$.  Local existence and uniqueness of a classical solution follows from the pioneering work by Eells and Sampson \cite{Ells} and K.C. Chang \cite{chang}. Equation \equ{har flow0} formally corresponds to the negative $L^2$-gradient flow for the Dirichlet energy $\int_\Omega |\nabla u|^2 dx$. This energy is decreasing
along smooth solutions $u(x,t)$:
$$
\frac {\pp}{\pp t} \int_\Omega |\nabla u(\cdot, t)|^2  = -\int_\Omega |u_t(\cdot, t)|^2.
$$
Chen-Struwe \cite{chen-struwe} found a global $H^1$-weak solution in any dimension. In the two-dimensional case \\ $\Omega\subset \R^2\mapsto S^2$ this solution can only become singular at a finite number of points in space-time \cite{Struwe}.

\medskip
If $T>0$ designates the first instant at which smoothness of \equ{har flow0} is lost, standard parabolic regularity leads to the fact that
$$
 \|\nabla u(\cdot, t)\|_\infty \, \to \,  +\infty \ass t\uparrow T.
$$
In the two-dimensional case, substantial knowledge on the possible blow-up structure  has been obtained in \cite{Ding-Tian,Lin-Wang1,Qing1,Qing-Tian,Struwe,Topping2}. Blow-up takes place only about a finite number of points $q_1,\ldots, q_k$, around which
the approximate form  $u(x,t) \approx  U\left (\frac{x-\xi(t)}{\la(t)}  \right )$ with $\la(t)\to 0$ where $U$ is a finite-energy harmonic map, namely a solution of $$\Delta U +|\nn U|^2 U=0, \quad |U|\equiv 1 \inn \R^2 , \quad , \quad \int_{\R^2} |\nabla U|^2 <+\infty $$
 and $\la(t)\to 0$ as $t\to T$. Moreover (up to subsequences), we have
\be \label{aa2}
|\nabla u(\cdot, t)|^2
\ \rightharpoonup  \ |\nabla u_*|^2 +   \sum_{i=1}^k 4\pi m_i\,\delta_{q_i} \ass t\to T,
\ee
 for some positive integers $m_i$ where $\delta_q$ denotes the unit Dirac mass at $q$.

\medskip
Less is known in the higher dimensional case $\Omega\subset \R^n\mapsto S^2$ in problem \equ{har flow0}. Chen-Struwe and Cheng \cite{chen-struwe,cheng} have proven
that the blow-up set in $\Omega$ is at most $(n-2)$-dimensional in the Hausdorff sense.
More refined information on the singular set
has been derived by Lin and Wang in \cite{Lin-Wang5}, see also \cite{LinLibro}.

\medskip
While various important blow-up classification results  are available, finding solutions explicitly exhibiting blow-up behavior has been
rather difficult. In fact, in the two-dimensional case they were even believed not to exist, see \cite{Chang-Ding-Ye}.
The first example of a blowing-up solution in the case $\Omega =B_2 \subset \R^2 $, the unit two-dimensional ball was found
by Chang-Ding-Ye   \cite{Chang-Ding-Ye} in the
{\em 1-corrotational symmetry class},
$$
u(x,t)  =  \left (\begin{matrix}    e^{ i\theta} \sin v(r,t)   \\  \cos v(r,t) \end{matrix}    \right ) , \quad x= re^{i\theta}  .
$$
where $v(r,t)$ is a scalar function.
System \equ{har flow0} reduces to the radial scalar equation
\begin{align*}
v_t = v_{rr} + \frac{v_r}r  -  \frac {\sin v\cos v} {r^2}, \quad v(0,t)=0, \quad  r\in (0,1).
\end{align*}
Suitable initial and boundary conditions and the use of barriers lead to finite-time blow-up at some $T>0$
in the form
$v(r,t) \approx w(\frac{r}{\lambda(t)}) $ with
$$
w(\rho) = \pi - 2\arctan (\rho).
$$
Van den Berg, Hulshof and  King  \cite{bhk} formally found that generically,
\begin{align*}
\la(t) \approx  \kappa \frac{ T-t}{|\log (T-t)|^2} \ass t\to T.
\end{align*}
for some $\kappa >0$. Raphael and Schweyer \cite{rs1}  rigorously constructed an entire 1-corrotational solution with this blow-up rate.
At the level of $u$, the solutions mentioned above have the form
$$
u(x,t)  \approx   W\left(\frac x{\la(t)}   \right )
$$
where $W(y)$ is the canonical 1-corrotational harmonic map
\begin{equation}
\label{U00}
W(y)  =    \frac 1{1+ |y|^2} \left (\begin{matrix}    2y   \\  |y|^2 -1 \end{matrix}    \right ) , \quad   y\in \R^2 .
\end{equation}
which satisfies
$$
\int_{\R^2} |\nabla W |^2 = 4\pi, \quad  W (\infty) =  {\bf e}_3 ,
$$
where
\begin{align}
\label{e123}
{\bf e}_1 =\left (\begin{matrix}   1 \\ 0\\   0 \end{matrix}    \right ), \quad {\bf e}_2 =\left (\begin{matrix}   0 \\ 1\\   0 \end{matrix}    \right ),\quad {\bf e}_3 =\left (\begin{matrix}   0 \\ 0\\   1 \end{matrix}    \right ).
\end{align}
We achieved in \cite{ddw} the first construction of a blow-up solution without symmetries in \equ{har flow0} in the case
$\Omega \subset \R^2 \mapsto S^2$:  for an arbitrary $\Omega\subset \R^2$,
given points $q_1,\ldots, q_k\in\Omega$ and $u_b ={\bf e}_3$ there is for any sufficiently small $T>0$ a solution $u(x,t)$  with precisely these $k$ blow-up points which, consistently with \equ{aa2}, satisfies
\be \label{aa3}
|\nabla u(\cdot, t)|^2
\ \rightharpoonup  \ |\nabla u_*|^2 +   \sum_{i=1}^k 4\pi \,\delta_{q_i} \ass t\to T,
\ee
which near each $q_j$ and after a rigid constant rotation has the approximate form
$$
u(x,t) \approx  W\left(\frac {x-q_j}{\la_j(t)}   \right ), \quad \la_j(t) =   \kappa_j \frac{ T-t}{|\log (T-t)|^2} \ass t\to T.
$$
Part of the difficulty in the construction is due to the {\em instability}  of the blow-up phenomenon here described once the 1-corrotational symmetry is violated, see \cite{ddw}. This instability had been numerically conjectured in \cite{williams}.

\medskip
In the case $\Omega \subset \R^3\to S^2$ only one example has been know, again in the 1-corrotational class and $\Omega =B_3$, the unit ball in $\R^3$. In this case the ansatz takes the form
$$
u(x,t)  =  \left (\begin{matrix}    e^{ i\theta} \sin v(r,z,t)   \\  \cos v(r,z,t) \end{matrix}    \right ) , \quad x=
\left (\begin{matrix}    re^{ i\theta}    \\  z \end{matrix}    \right )
$$
System \equ{har flow0} reduces to the scalar equation
\be\label{111}
v_t = v_{rr} +\frac{v_r}r + v_{zz}   -  \frac {\sin v\cos v} {r^2}, \quad v(0,z,t)=0, \quad r\in (0,1).
\ee
Adapting the barrier method in \cite{Chang-Ding-Ye}, Grotowski \cite{grotowski}  found boundary and initial conditions and a  solution to
\equ{111} that blows up on a subset of the $z$-axis $r=0$.
(See a related result in \cite{grotowski2}.)
No information on the structure (or dimension) of this set or on the blow-up rate is provided.

\medskip
In this paper we construct the first example of a solution with a 1-dimensional blow-up set in an arbitrary axisymmetric bounded domain
$\Omega \subset \R^3$. We observe that  this example saturates the estimate for the dimension $n-2$ of the singular set found in \cite{cheng} (for $n=3$).

\medskip
Before stating our main result we introduce the setting we will consider.
We say that $\Omega\subset \R^3$ is an axisymmetric domain if it can be expressed in the form
\begin{align}
\label{axi1}
\Omega =  \{ ( re^{i\vartheta} , z) \ /\ (r,z) \in \DD, \quad \vartheta\in [0,2\pi] \} .
\end{align}
where $\DD\subset \{(r,z) /\ r\ge 0\}\subset \R^2 $. When $\Omega$ is axisymmetric, it is natural to look for solutions  of \equ{har flow0}  with the same axial symmetry, namely
\begin{align*}
u(x,t) =  \ttt u ( r,z,t) , \quad x = ( re^{i\vartheta} , z), \quad (r,z)\in \DD ,
\end{align*}
for a function $\ttt u : \DD \times (0,T) \to S^2$.

\medskip
We fix in what follows and axisymmetric, smooth and bounded domain $\Omega$  of the form \equ{axi1}
Let us consider  a point $(r_0,z_0) \in \DD$ with $r_0>0$  and let $\Gamma$ be the curve inside $\Omega$ given by the copy of $S^1$,
\be\label{Gamma} \Gamma := \{ (r_0e^{i\vartheta}, z_0) \ / \ \vartheta \in [0,2\pi) \} \subset \Omega \ee

\begin{theorem}\label{teo1}
Let $\Omega \subset \R^3$ be an axisymmetric domain and consider problem $\equ{har flow0}$ with boundary condition $u_b \equiv {\bf e}_3$. Then for all
sufficiently small $T>0$ there exists an initial condition and a solution $u(x,t)$ that blows-up exactly on the curve $\Gamma$ in $\equ{Gamma}$,
with a profile of the form
$$
u(x,t) = W \left (\frac { (r,z)- \xi(t) } {\la(t)}     \right ) + u_*(x) , \quad x= (re^{i\vartheta},  z) \ass t\to T.
$$
where $W(y)$ is the standard two-dimensional 1-corrotational map  $\equ{U00}$,  $u_* \in H^1(\Omega)$,
 $\la(t)\to 0$ and  $\xi(t) \to (r_0,z_0)$.

\end{theorem}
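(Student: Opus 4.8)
The plan is to reduce the equation to a two-dimensional one and then run the inner--outer parabolic gluing scheme of \cite{ddw}. Since $\Omega$ is axisymmetric and $u_b\equiv{\bf e}_3$ is invariant under rotations about the axis, I look for $\vartheta$-independent solutions $u(x,t)=\tilde u(r,z,t)$, $x=(re^{i\vartheta},z)$, $(r,z)\in\DD$. For such maps $\Delta u=\partial_{rr}\tilde u+\tfrac1r\partial_r\tilde u+\partial_{zz}\tilde u$ and $|\nabla u|^2=|\partial_r\tilde u|^2+|\partial_z\tilde u|^2$, so \equ{har flow0} becomes the planar harmonic map flow into $S^2$ on $\DD$ perturbed by a single first-order term,
\begin{equation*}
\tilde u_t=\Delta_{(r,z)}\tilde u+|\nabla_{(r,z)}\tilde u|^2\,\tilde u+\tfrac1r\,\partial_r\tilde u\inn\DD\times(0,T),\qquad \tilde u={\bf e}_3\onn\partial\DD\cap\{r>0\},
\end{equation*}
with the natural condition $\partial_r\tilde u=0$ on $\DD\cap\{r=0\}$ (there the operator is just the regular axisymmetric 3D Laplacian). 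The circle $\Gamma$ corresponds to the interior point $\xi_0:=(r_0,z_0)\in\DD$, $r_0>0$, and on a fixed neighbourhood of $\xi_0$ the coefficient $1/r$ is smooth and bounded, so $\tfrac1r\partial_r\tilde u$ is a genuinely lower-order perturbation of the planar flow treated in \cite{ddw}. This is the crux: the construction reduces to a small, smooth perturbation of the planar one, localised around the single point $\xi_0$; axisymmetry then lifts this point to the whole circle $\Gamma$, which re-enters only through the weight $2\pi r\,dr\,dz$ in the Dirichlet energy.

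Next I would set up the gluing. Fix a smooth axisymmetric $u_*:\bar\Omega\to S^2$ with $u_*\equiv{\bf e}_3$ on $\partial\Omega$ (for instance $u_*\equiv{\bf e}_3$), so that $u_*(\xi_0)={\bf e}_3=W(\infty)$, and take as approximate solution
\begin{equation*}
U_{\lambda,\xi,\Theta}(r,z,t)=Q_{\Theta(t)}\,W\!\Big(\tfrac{(r,z)-\xi(t)}{\lambda(t)}\Big)+\Phi_*[\lambda,\xi,\Theta]+u_*(r,z),
\end{equation*}
with unknown parameters $\lambda(t)>0$, $\xi(t)$, and $Q_{\Theta(t)}\in SO(3)$ a rotation about the ${\bf e}_3$ axis, and with $\Phi_*$ a lower-order correction that matches the algebraic tail of the bubble to the exterior and restores the boundary condition. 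I then look for the exact solution in glued form $\tilde u=U_{\lambda,\xi,\Theta}+\psi+\eta\,\varphi$, where $\eta$ is a cutoff supported near $\xi_0$, $\varphi=\varphi(y,\tau)$ is the inner correction in the blow-up variables $y=\tfrac{(r,z)-\xi(t)}{\lambda(t)}$, $\tau=\int_0^t\lambda^{-2}\,ds$, and $\psi$ is the outer correction on $\DD\times(0,T)$. Substituting gives the standard coupled system: the outer equation for $\psi$ is a forced heat equation on $\DD$ (equivalently, the 3D heat flow on axisymmetric maps) whose source collects the error of $U_{\lambda,\xi,\Theta}$ away from $\xi_0$, the inner--outer interaction, and the $\tfrac1r\partial_r$ term; the inner equation is, to leading order,
\begin{equation*}
\partial_\tau\varphi=L_W[\varphi]+\lambda^2\,S[U_{\lambda,\xi,\Theta}]+\lambda^2\,\mathcal E[\varphi,\psi,\lambda,\xi,\Theta]\inn\R^2\times(0,\infty),
\end{equation*}
where $L_W$ is the linearisation of the harmonic map operator at $W$.

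Then I would invoke the linear theory for $L_W$. Its bounded kernel is spanned by the two translation modes $\partial_{y_1}W,\partial_{y_2}W$ (decay $|y|^{-2}$, hence in $L^2$) together with $y\cdot\nabla W$ (dilation) and $JW$ (rotation), which decay only like $|y|^{-1}$ and are the borderline, non-$L^2$ resonances. Solving the inner equation in the weighted topology adapted to these resonances requires orthogonality of its right-hand side to the four modes, and imposing this produces the reduced ODE system for $(\lambda,\xi,\Theta)$. As in the planar case the dilation relation is decisive: matching the inner dilation mode to the outer field forces the logarithmically corrected rate $\lambda(t)\approx\kappa\,(T-t)/|\log(T-t)|^2$ as $t\to T$. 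The new term $\tfrac1r\partial_r$ enters the error $S[U_{\lambda,\xi,\Theta}]$ chiefly through $\tfrac1{\lambda r_0}\,Q_\Theta\partial_{y_1}W$, which projects onto the translation mode; since $\partial_{y_1}W\in\ker L_W$ this cannot be absorbed by a correction and must be cancelled by the modulation, giving $\dot\xi\approx-\tfrac1{r_0}\,{\bf e}_1+(\text{lower order})$ -- a bounded drift of total size $O(T)$ toward the axis. Hence $\xi(t)$ converges, and one chooses $\xi(0)=(r_0,z_0)+O(T)$ so that the limit is exactly $\xi_0$; this is precisely why $T$ must be small, so that the bubble stays uniformly away from $\{r=0\}$. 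The residual non-corotational effects -- from $\Phi_*$, from the tail of the bubble, and from the higher-order part of $\tfrac1r\partial_r$ -- are handled exactly as the symmetry-breaking perturbations of \cite{ddw}, the parameter $\Theta$ absorbing the obstruction along $JW$.

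Finally I would close the scheme by a fixed point and a topological argument. For frozen parameters one solves the inner equation for $\varphi$ in weighted $L^\infty$ spaces with space--time weights tuned to the $|y|^{-1}$ resonances, and the outer equation for $\psi$ by parabolic estimates on the bounded domain $\DD$ -- the coefficient $1/r$ being smooth near $\xi_0$ and the solution staying close to the constant ${\bf e}_3$ near the axis -- and a contraction in the product space closes the coupled system. The reduced ODEs are then integrated with $\lambda(0)\sim T/|\log T|^2$ and $\xi(0)\sim\xi_0$, and the terminal conditions $\lambda(T^-)=0$, $\xi(T^-)=\xi_0$, together with the requirement that no blow-up occur on $[0,T)$, are met by the standard shooting/degree argument in the finitely many unstable parameters, exactly as in \cite{ddw}; taking $T$ small keeps every error term under control. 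Undoing the reduction produces a solution of \equ{har flow0} with a suitable initial condition, smooth on $[0,T)$, of the asserted form $u(x,t)=W\big(\tfrac{(r,z)-\xi(t)}{\lambda(t)}\big)+u_*(x)+o(1)$ with $u_*\in H^1(\Omega)$, $\lambda(t)\to0$ and $\xi(t)\to\xi_0$, blowing up exactly on $\Gamma$ at time $T$; the energy concentration stated in the abstract follows from the value of $\int_{\R^2}|\nabla W|^2$, the weight $2\pi r\,dr\,dz$, and $\xi(t)\to(r_0,z_0)$. The main obstacle is the one inherited from the planar construction: controlling the slow, logarithmic-in-time dilation (and rotation) resonances of $L_W$ through a sufficiently accurate modulated ansatz, the correct weighted norms, and the topological argument fixing the unstable parameters; the genuinely three-dimensional ingredients -- the term $\tfrac1r\partial_r$, the curvature of $\Gamma$, and the behaviour near the axis -- are lower-order perturbations that the same machinery absorbs.
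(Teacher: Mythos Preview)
Your overall architecture matches the paper's: reduce to the planar problem on $\DD$ with the extra term $\tfrac1r\partial_r\tilde u$, run the inner--outer gluing of \cite{ddw}, and observe that the new term feeds the translation mode and produces a bounded drift in $\xi$. Your computation of that drift is right in spirit; the paper gets $\dot\xi_1=-1/\xi_1+o(1)$, hence $\xi_1(t)=\sqrt{r_0^2+2(T-t)}+o(1)$.

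There is, however, a genuine gap in your treatment of the dilation/rotation modes. You write ``matching the inner dilation mode to the outer field forces the logarithmically corrected rate'' and propose taking, for instance, $u_*\equiv{\bf e}_3$. That choice makes the scheme collapse. In the paper the orthogonality conditions along $Z_{01},Z_{02}$ produce a \emph{nonlocal} integral equation for $p(t)=\lambda(t)e^{i\omega(t)}$,
\[
\int_{-T}^{\,t-\lambda^2}\frac{\dot p(s)}{t-s}\,ds\;\approx\;\div\psi^*(\xi_0,t)+i\,\curl\psi^*(\xi_0,t),
\]
where $\psi^*$ denotes the first two components of the outer part. The logarithmic rate $\lambda(t)\sim\kappa(T-t)/|\log(T-t)|^2$ is forced precisely by a \emph{nonzero} right-hand side; with a constant outer profile the right-hand side vanishes and nothing drives $\lambda$ to zero. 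The paper therefore builds into the ansatz an auxiliary piece $Z^*$ solving the linear heat equation with initial datum $Z_0^*$ chosen to satisfy $Z_0^*(\xi_0)\approx 0$ \emph{and} the nondegeneracy $\div\tilde z_0^*(\xi_0)+i\,\curl\tilde z_0^*(\xi_0)\neq 0$ (their condition \eqref{condZ0}). This is not a cosmetic correction that ``matches the tail''; it is the engine of the blow-up and determines $\kappa$. Your $\Phi_*$ plays a different role (the paper's $\Phi^0$, an explicit Duhamel-type corrector that cancels the slow-decaying part of the basic error), and is separate from $Z^*$.

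Two smaller points. First, the paper does not close by a contraction plus a shooting/degree argument on unstable directions; it uses the Schauder fixed point theorem, and the terminal constraint $\Psi^*(\xi_0,T)=0$ is enforced by three free constants $c_1,c_2,c_3$ inserted into the initial datum of the outer problem. Second, the reduced equation for $p$ is genuinely an integral (Volterra-type) equation rather than an ODE, and inverting it approximately (their Proposition~\ref{propIntegralOp}) is one of the main technical steps you have not accounted for.
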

The proof provides much finer information on the asymptotic profile. In particular
we have, analogously to \equ{aa3},
\begin{align*}
|\nabla u(\cdot, t)|^2
\ \rightharpoonup  \ |\nabla u_*|^2 +   4\pi \,\delta_{\Gamma} \ass t\to T,
\end{align*}
with $\delta_\Gamma$ the uniform Dirac measure on the curve $\Gamma$. Moreover, writing $\xi(t) = (\xi_1(t), \xi_2(t))$ we have the
asymptotic expressions
\begin{align*}
\left\{
\begin{aligned}
\xi_1(t) &=   \sqrt{ r_0^2 +  2(T-t) } +  O ( (T-t)^{1+\sigma}) , \\
\xi_2(t) &  =  z_0+   O ( (T-t)^{1+\sigma}),\\
\la(t) & =   |\kappa|  \frac{ T-t}{|\log (T-t)|^2} (1+ o(1)) ,
\end{aligned} \right.
\end{align*}
as $t \uparrow T$,  for some $\kappa\in \C$, $ \sigma>0$.

\bigskip
The proof of this result takes strong advantage of the symmetry of revolution of the domain. In fact,
restricting the problem to the class of axisymmetric functions, Problem \equ{har flow0} reduces to a
problem only involving the variables $(r,z)$ and the two-dimensional domain $\DD$. We will closely follow the steps
of the main result in \cite{ddw} and make reference to intermediate technical results there.

\medskip
With a very similar proof we can construct simultaneous blow-up in any finite number of disjoint circles $\Gamma$. It would be a
very interesting issue to consider the case $r_0=0$ case in which the singularity would asymptotically collapse onto a point
in the $z$-axis. Lifting the revolution symmetry assumption potentially obtaining other blow-up sets
is a very interesting and difficult issue.

\section{The axially symmetric problem}

In the setting of Theorem \ref{teo1} it is natural to look for solutions which are axially symmetric. More precisely, we
look of a solution of \equ{har flow0} with boundary condition $u_b = {\bf e}_3$ of the form
$$
u(x,t)  :=  \ttt u (r,z) , \quad x= (r e^{i\vartheta}, z) , \quad (r,z) \inn \DD .
$$
where $ \ttt u : \DD\subset \R^2  \to S^2$.
We directly check that in this situation our problem becomes

\be\left \{
\begin{aligned}
\ttt u_t  &=  \ttt u_{rr} + \frac 1r \ttt u_{r}  +   \ttt u_{zz} +  |\nn \ttt u |^2 \ttt u \inn \DD\times (0,T) \\
\ttt u_r  &=0 \onn    \{r=0\} \cap \DD \times (0,T)\\
\ttt u  & =    {\bf e}_3  \onn (\pp \DD\setminus \{r=0\}) \times (0,T) \\
\ttt u(\cdot, 0) &=  \ttt u_0 ,
\end{aligned}\right.
\label{hmf1}\ee
where $\nn \ttt u = ( \ttt u_r, \ttt u_z) $. We want to find a solution $\ttt u(x,z) $
that blows up exactly at the point $q = (r_0,z_0)$ as $t\to T$ in the form
$$
\ttt u(r,z) \approx W \left (\frac {(r,z) -\xi(t) } {\la(t)} \right ), \quad \la(t)\to 0, \quad \xi(t)\to q_0 .
$$
To make a precise ansatz, we consider the family of two-dimensional 1-corrotational harmonic maps
\[
 U_{\la, \xi , \omega}  (r,z)  :=
 Q_\omega\, W (  y ) ,\quad y= \frac {(r,z)- \xi }{\la}, \quad \xi\in \R^2 \   \omega\in \R,\  \la>0,
\]
 where $ W(y)  $ is the canonical 1-corrotational harmonic map \equ{U00}
 and $Q_\omega$ is the $\omega$-rotation matrix
\[
Q_\omega:=   \left [ \begin{matrix}  \cos\omega  &  -  \sin \omega  & 0  \\  \sin\omega  &  \cos\omega & 0  \\  0 & 0 & 1   \end{matrix} \right ].
\]
All these functions satisfy the elliptic equation
\begin{align}
\label{hm0}
U_{rr}+ U_{zz}  + |\nn U|^2 U =0 \inn \R^2, \quad |U|=1 .
\end{align}
For any sufficiently small number $T>0$
we look for an initial datum $u_0$ such that the solution $\ttt u(r,z,t)$ of problem \equ{hmf1}
looks at main order like
\[
  U_{\la(t), \xi(t) , \omega(t)}  (r,z)  =
Q_{\omega(t)}\,  W(  y), \quad y= \frac {(r,z)- \xi(t) }{\la(t)} ,
\]
for certain functions  $\xi(t)$, $\la(t)$ and $\omega (t)$ of class $C^1([0,T])$ such that
\[
\xi(T) =q, \quad \la(T)=0 , \quad \omega(T)= 0.
\]
We consider a first approximation $U (r,z,t)$ which smoothly interpolates $U_{\la(t), \xi(t) , \omega(t)}(r,z,t)$ with $(r,z)\approx q$  and the constant vector ${\bf e}_3$.
Let $\eta(\zeta)$ be a smooth cut-off function so that
$$\eta(\zeta )= \begin{cases} 1 & \hbox{  for $\zeta <1$,}\\ 0 &\hbox{  for $\zeta >2$}. \end{cases}$$
For a fixed small number $\delta>0$ we let
$$ \eta^\delta (r,z) :=  \eta\left ( \frac {|(r,z)-q |}\delta \right ) $$
and set
\be\label{UUU}
U (r,z,t)\ :=\   \, \eta^\delta(r,z) \, U_{\la(t), \xi(t) , \omega(t)}(r,z,t)  \,+\,  (1-\eta^\delta (r,z) ) \, {\bf e}_3 .
\ee

 We shall find values for these functions
so that for a small remainder $v(x,t)$ we have that
$ \ttt u  = U + v $ solves \equ{hmf1}.  The condition $|U+ v|=1$
tells us that $u$ can be written as
\be\label{v}
u(x,t)= U+  \Pi_{U^\perp}\varphi +  a(\Pi_{U^\perp}\varphi) U,
\ee
where $\vp$ is a small function with values into $\R^3$ and we denote
$$
\Pi_{U^\perp} \vp :=   \vp - (\vp\cdot U) U, \quad  a(\zeta)  :=   \sqrt{1 - |\zeta|^2}-1  .
$$
The term $a(\Pi_{U^\perp}\varphi)$ has  a quadratic size in $\vp$. We choose to decompose the remainder $\vp(r,z,t)$ in \equ{v} as the addition
of an ``outer'' part, better expressed in the original variables $(r,z)$, and an ``inner'' part which is supported near the singularity and it is naturally  expressed as function of the slow variable $y$. More precisely, we let
\be\label{aris}
\vp(r,z,t) \  = \  \vp^{out} (r,z,t)  +   \vp^{in} (y,t),  \quad y=\frac{(r,z)-\xi(t)}{\la(t)}
\ee
where
\begin{align}
\nonumber
 \vp^{in} (y,t) \ = &  \  \eta_{R(t)}\left (y  \right) Q_{\omega(t)} \phi(y,t), \quad \phi(y,t)\cdot W(y) \equiv 0
\end{align}
and  $ \eta_R(y) :=  \eta\left (\frac {|y|} R  \right)$
 The function $\phi(y,t)$ is defined for $|y|< 3R(t)$ where $R(t)\to +\infty$ and $\la(t) R(t)\to 0$ as $t\to T$.
With these definitions we see that $\Pi_{U^\perp} \vp^{in}= \vp^{in}$.

\medskip
We choose to the  decompose the outer part $\vp^{out}(x,t)$  in \equ{aris} as
\begin{align}
\nonumber
\vp^{out}(x,t) =  \Phi^0[\omega,\la, \xi] +  Z^*(x,t)  \, + \,  \psi(x,t),
\end{align}
where $ \Phi^0$ and  +  $Z^*(x,t)$ are explicit functions chosen as follows:
$\Phi^0[\omega,\la, \xi]$ is a function (which will be precisely described in the next section) that at main order eliminates the largest slow-decaying  part of the error of approximation $E(r,z,t)$ in \equ{hmf1},
namely $E = S(U)$, where
$$
S(\ttt u) :=  - \ttt  u_t +  \ttt  u_{rr} + \frac {\ttt u_r}r  + \ttt  u_{zz}  + |\nn \ttt u|^2 \tilde u  .
$$
Writing  $p(t) := \la(t) e^{i\omega(t)}$ and using polar coordinates $$(r,z)= \xi(t)+ s e^{i\theta},$$
  we require
$$
 \Phi^0_t  -   \Phi_{rr}^0 -\Phi_{zz}^0  \approx   \frac 2s \left [\begin{matrix} \dot p(t) e^{i\theta} \\ 0   \end{matrix}\right ] \approx E(r,z,t)  .
 $$
With the aid of Duhamel's formula for the standard heat equation, we find that the following function is a good approximate solution:
\begin{align}
\label{defPhi0}
\Phi^0[\omega,\la,\xi] (s,\theta,t) & :=   \left [ \begin{matrix}  \vp^0(s,t) e^{i\theta }
\\ 0 \end{matrix}   \right ]
\\
\nonumber
\vp^0(s,t)
&
= -\int_{-T}^t  \dot p(\tau) s k(z(s),t-\tau) \, d\tau
\\
\nonumber
z(s) & = \sqrt{ s^2+ \la^2} ,\quad k(z,t) = 2\frac{1-e^{-\frac{z^2}{4 t}}}{z^2} ,
\end{align}
where for technical reasons $p(t)$ is assumed to be defined in $[-T,T]$, that is, also for some negative values of $t$.
On the other hand, we let $Z^*:\Omega\times (0,\infty) \to \R^3$ satisfy
\begin{align}
\label{heatZ*}
\left\{
\begin{aligned}
Z_{t}^* &=  \Delta_x Z^*  \inn \Omega\times(0,\infty), \\
Z^*(\cdot ,t)  &=  0\inn  \pp\Omega \times (0,\infty),\\
Z^*(\cdot ,0)  &=  Z^*_0   \inn  \Omega ,
\end{aligned}
\right.
\end{align}
where $Z_0^*(x)$ is is a small, sufficiently regular, axially symmetric function, more precisely
\begin{align}
\label{notationZ0star}
Z_0^*(x) = \ttt Z_0^*(r,z) =  \left [ \begin{matrix} \ttt z_0^*(r,z) \\  \ttt z_{03}^* (r,z) \end{matrix}   \right ] , \quad \ttt z_0^*(r,z) = \ttt z^*_{01}(r,z)  + i \ttt z^*_{02}(r,z), \quad x= (re^{i\vartheta}, z).
\end{align}
 function essentially satisfying
 $$\ttt Z_0^*(q)= 0, \quad  \div  \ttt z^*_0(q)  + i \curl  \ttt z^*_0(q) \ne 0 . $$
 where we denote
\begin{align}
\label{div-curl-z0star}
 \div  \ttt z^*_0(r,z) =   \pp_r \ttt z^*_{01}(r,z) +  \pp_z \ttt  z^*_{02}(r,z), \quad \curl  \ttt z^*_0(r,z)= \pp_r \ttt z^*_{02}(r,z) -  \pp_z \ttt z^*_{01}(r,z).
\end{align}
Of course we have $Z^*(x,t) = \ttt Z^*(r,z,t)$.
Then for $(r,z,t)\in \DD \times (0,T) $ we make the ansatz
\be \left\{ \begin{aligned}
\ttt u(r,z,t)\  = & \  U(r,z,t) \,  +\, v(r,z,t), \\  v(r,z,t)\ = &\   \Pi_{U^\perp} \big(    \eta^\delta\, \Phi^0[\omega, \la , \xi] + \ttt Z^* + \psi\big ) \, + \, \eta_R Q_\omega \phi  + a U  \end{aligned} \right. \label{canave}\ee
for a blowing-up solution $\ttt u(r,z,t)$ of \equ{hmf1},
where $\phi$ and $\psi$ are lower order corrections.
Our task is to find  functions $\omega(t), \la(t) , \xi(t)$, $\psi(x,t)$ and $\phi(y,t)$ as described above, such that the remainder $v$ remains uniformly small.

\medskip
We will define a system of equations that we call the
{\em inner-outer gluing system}, essentially  of the form
\begin{align}
\nonumber
\left\{
\begin{aligned}
\la^2 \phi_t\  & =\  L_ W  [\phi ]\ +\ H[p,\xi, \psi,\phi],  \quad
\phi\cdot  W   =  0 \inn     \R^2\times (0,T)
\\
\psi_t\  &=\   \psi_{rr} + \frac{\psi_r}r + \psi_{zz} \ +\  G[p,\xi, \psi,\phi]\qquad \qquad \qquad\ \,   \inn \DD \times (0,T)
\end{aligned}
\right.
\end{align}
where
\be\label{L}
L_W[ \phi] = \Delta_y \phi + |\nn_y W|^2 \phi + 2(\nn_y\phi \cdot \nn_y W)W , \quad \phi\cdot W = 0
\ee
is the linearized operator for equation \equ{hm0} around $U= W$,
so that if the pair of functions $(\phi(y,t),\psi(x,t))$ solves it then $\ttt u$ given by \equ{canave}
is a solution of \equ{hmf1}.
 The point is to adjust the parameter functions $\omega, \la,\xi$ such that the inner problem can be solved for $\phi(y,t)$ which decays as $|y|\to \infty$. To fix the idea, let us consider the approximate elliptic equation, where time is regarded just as a parameter,
 $$
  L_ W  [\phi ]\ +\ H[p,\xi, 0,0] = 0 \inn \R^2
  $$
As we will discuss, a space-decaying solution $\phi(y,t)$ to this problem exists if a set of orthogonality conditions of the form
$$
\int_{\R^2} H[p,\xi, 0,0](y,t)\, Z(y)\, dy = 0 \foral Z\in \mathcal Z
$$
 where $\mathcal Z $  is a  4-dimensional space constituted by decaying functions $Z(y)$ with $L_W[Z]=0$. These solvability conditions lead to an essentially explicit system
of equations for the parameter functions which will tell us in particular that for some small $\sigma>0$
\begin{align*}
\begin{aligned} p(t) & =  - (\div \ttt z^*_0(q)  + {i\curl \ttt z_0^* (q)} ) \frac {|\log T| }{\log^2 (T-t)} (1+ O( |\log T|^{-1+\sigma} )) ,   \\
\xi_1(t) & =   \sqrt{ r_0^2 + 2 (T-t) } +  O ( (T-t)^{1+\sigma}) \\
\xi_2(t) & =   z_0+   O ( (T-t)^{1+\sigma}),\end{aligned}
\end{align*}
and we recall that we are consistently asking $ \div \ttt z^*_0(q)  + i{\curl \ttt z_0^* (q)} \ne 0$.

\medskip
In the next sections we will carry out in detail the program for the construction sketched above.

\section{The linearized operator around the bubble}

We can represent
 $ W  (y)$ in polar coordinates,
$$
 W  (y) =  \left (\begin{matrix}    e^{ i\theta} \sin {w(\rho )}   \\  \cos {w(\rho )} \end{matrix}    \right ), \quad w(\rho ) = \pi - 2\arctan (\rho ), \quad y= \rho e^{i\theta}.
$$
We notice that
$$
w_\rho = - \frac 2{1+\rho^2} , \quad \sin w = -\rho w_\rho = \frac {2\rho} {1+\rho^2}, \quad \cos w = \frac {\rho^2 -1}{1+\rho^2} .
$$
For the linearized operator  $L_W$ in \equ{L} we have that
$L_ W  [ Z_{lj}] =0 $ where
\begin{align}
\label{ZZ}
\left\{
\begin{aligned}
Z_{01}(y) & = \rho w_\rho(\rho)\, E_1(y)
&
   Z_{02}(y) &= \rho w_\rho(\rho)\, E_2(y)
\\
  Z_{11}(y) & = w_\rho(\rho)\,  [ \cos\theta \, E_1(y) + \sin\theta\,  E_2(y)  ]
&
 Z_{12}(y)  & =w_\rho(\rho)\,  [ \sin\theta  \, E_1(y)  - \cos \theta\,  E_2(y)  ] \\
 Z_{-1,1}(y) &= \rho^2 w_\rho(\rho)[ \cos \theta E_1(y) -\sin \theta E_2(y) ]
&
Z_{-1,2}(y) &=  \rho^2 w_\rho (\rho)[ \sin \theta E_1(y) + \cos \theta E_2(y) ]  .
\end{aligned}
\right.
\end{align}
and
\[
E_1(y) =    \left (\begin{matrix}    e^{ i\theta} \cos w(\rho )   \\ - \sin {w(\rho )} \end{matrix}    \right ), \quad E_2(y) =
\left (\begin{matrix}    ie^{ i\theta}    \\  0 \end{matrix}    \right ) .
\]
These vectors from an orthonormal basis of the tangent space to $S^2$ at the point $ W  (y)$.

\subsection*{The linearized operator at functions orthogonal to \texorpdfstring{$U$}{U}}
We consider the linearized operator $L_U$ analogous to $L_W$ but taken around our basic approximation $U$, that is,
\[
L_U[ \vp] =  \vp_{rr}  + \vp_{zz}  + |\nn U|^2 \vp + 2(\nn \vp \cdot \nn  U)U .
\]
It will be especially significant to compute  the action of $L_U$ on functions with values pointwise orthogonal to $U$.
In what remains of this section we will derive various formulas that will be very useful later on.

\medskip
For an arbitrary function
$\Phi(r,z)$ with values in $\R^3$ we denote the projection
$$
\Pi_{U^\perp} \Phi :=   \Phi - (\Phi\cdot U) U.
$$
A direct computation shows the validity of the following:
\begin{equation}
\nonumber
L_U[\Pi_{U^\perp}\Phi] = \Pi_{U^\perp} (\Phi_{rr}+ \Phi_{zz})  + \ttt L_U[\Phi ]
\end{equation}
where
\[
\ttt L_U[ \Phi ] := |\nn U|^2 \Pi_{U^\perp} \Phi - 2\nn (\Phi \cdot U ) \nn U,
\]
with $\nn = (\pp_r,\pp_z)$
and
$$
\nn (\Phi \cdot U ) \nn U =  \pp_{r} (\Phi \cdot U )\, \pp_{r} U  + \pp_{z} (\Phi \cdot U )\, \pp_{z} U  .
$$
A very convenient expression for $\ttt L_U[ \Phi ]$ is obtained if we use polar coordinates. Writing in complex notation
$$
\Phi(r,z) = \Phi(s,\theta), \quad  (r,z) = \xi + s e^{i\theta},
$$
we find
\begin{align}
\label{Ltilde}
 \ttt L_U[\Phi] =  - \frac 2{\la} w_\rho(\rho)\,  [ (\Phi_s \cdot U)Q_\omega E_1  - \frac 1{s} (\Phi_\theta \cdot U) Q_\omega E_2 ], \quad \rho = \frac s\lambda.
\end{align}

\bigskip
We mention two consequences of formula \equ{Ltilde}.
Let us assume that $\Phi(x)$ is a  $C^1$ function $\Phi : \DD \to \C \times \R$, which we express in the form
\begin{align}
\label{notation-Phi}
\Phi(r,z)\ =\ \left ( \begin{matrix} \vp_1 (r,z) + i \vp_2(r,z)  \\ \vp_3 (r,z)  \end{matrix} \right ).
\end{align}
We also denote $$\vp = \vp_1 + i \vp_2 , \quad \bar \vp = \vp_1  - i \vp_2 $$ and define the operators
$$
\div \vp   = \pp_{r}\vp_1 + \pp_{z}\vp_2, \quad \curl \vp = \pp_{r}\vp_2 - \pp_{z}\vp_1 . $$
Then the following formula holds:
\be
 \ttt L_U [\Phi ] =  \ttt L_U [\Phi ]_0  +  \ttt L_U [\Phi ]_1 +  \ttt L_U [\Phi ]_2\ ,
 \label{Ltilde2} \ee
 where
\begin{align}
\label{Ltilde-j}
\left\{
\begin{aligned}
  \ttt L_U [\Phi ]_0 & =    \la^{-1} \rho w_\rho^2\, \big [\, \div ( e^{-i\omega} \vp)\, Q_\omega  E_1    + \curl ( e^{-i\omega} \vp)\, Q_\omega E_2
  \, \big ]\,
 \\
\ttt L_U [\Phi ]_1 & =
   -\,
  2\la^{-1} w_\rho  \cos w \, \big [\,(\pp_{r} \vp_3) \cos \theta +    (\pp_{z} \vp_3) \sin  \theta \, \big ]\,Q_{\omega} E_1
\\
& \quad - 2\la^{-1}  w_\rho  \cos w  \, \big [\, (\pp_{r} \vp_3) \sin \theta -    (\pp_{z} \vp_3) \cos  \theta \, \big ]\, Q_{\omega}E_2\ ,
\\
\ttt L_U [\Phi ]_2 &=   \quad
\la^{-1} \rho w_\rho^2 \, \big [\, \div (e^{i\omega}\bar \vp)\, \cos 2\theta  -   \curl ( e^{i\omega}\bar \vp)\, \sin 2\theta  \, \big ]\, Q_{\omega} E_1
\\
&  \quad
+  \la^{-1} \rho w_\rho^2 \, \big [\,  \div ( e^{i\omega}\bar \vp)\, \sin 2\theta +   \curl  ( e^{i\omega}\bar \vp)\,  \cos 2\theta   \, \big ]\,Q_{\omega}E_2 .
\end{aligned}
\right.
\end{align}


Another corollary of formula \equ{Ltilde} that we single out is the following:
assume that
$$
\Phi (r,z) =  \left ( \begin{matrix} \phi(s) e^{i\theta} \\ 0  \end{matrix}  \right) , \quad x = \xi + se^{i\theta} , \quad \rho =\frac s\la
$$
where $\phi(s)$ is complex valued.
Then
\begin{align}
\label{uii}
\ttt L_U [\Phi] =    \frac 2\la  w_\rho(\rho)^2   \left [  {\rm Re } \,( e^{-i\omega} {\partial_s \phi(s) } )  Q_\omega E_1   +
 \frac 1s {\rm Im}  \,( e^{-i\omega} \phi(s))  Q_\omega E_2  \right ].
\end{align}
For the proof of the formulas above see \cite{ddw}, section~2.

\hide{
A final result in this section is a computation (in polar coordinates) of the operator $L_U$ acting on a function of the form
\[
\Phi (r,z) =  \vp_1(\rho, \theta) Q_\omega E_1 + \vp_2(\rho, \theta)Q_\omega E_2 ,  \quad x= \xi + \la \rho e^{i\theta}.
\]
We have:
\begin{align}
L_U[\Phi ] & =   \la^{-2}  \left ( \pp^2_\rho \vp_{1} +  \frac  { \pp_\rho\vp_{1} } {\rho }  +  \frac { \pp^2_\theta \vp_{1} } {\rho^2  }+    (2w_\rho ^2 - \frac 1{\rho^2} )\vp_1 - \frac{2}{\rho^2}  \pp_\theta \vp_{2}\cos w   \right ) \,  Q_\omega E_1
\nonumber\\ & \quad
+   \la^{-2}  \left ( \pp^2_\rho \vp_{2} +  \frac  { \pp_\rho\vp_{2} } {\rho }  +  \frac { \pp^2_\theta \vp_{2} } {\rho^2  }+    (2w_\rho  ^2 - \frac 1{\rho^2} )\vp_2  + \frac{2}{\rho^2}  \pp_\theta \vp_{1}\cos w \right ) \, Q_\omega E_2 .
\nonumber
\end{align}
}

\section{The ansatz and the inner-outer gluing system}

The equation we want to solve is $S(\tilde u)=0$, with $\tilde u=U+v$.
A useful observation that we make is that as long as the constraint $|\ttt  u|=1$ is kept at all times and $\ttt u= U+ v$ with $|v|\le \frac 12 $ uniformly, then  for $\ttt u$ to solve  equation \equ{hmf1} it suffices that
\be \label{bU} S(U+v) = b(r,z,t) U \ee  for some scalar function $b$. Indeed, we observe that since $|\ttt u|\equiv 1 $ we have
\[
b\, (U\cdot \ttt  u) = S(\ttt u) \cdot \ttt  u = -  \frac 12 \frac d{dt} {|\ttt u|^2} +  \frac 12 (\pp^2_r + \pp^2_z) {|\ttt u|^2} +
   \frac 1{2r} \pp_r |\ttt u |^2  = 0 ,
\]
and since  $U\cdot u \ge  \frac 12 $, we find that  $b\equiv 0$.

\medskip

We find the following expansion for $S(U+v)$ with $v=\Pi_{U^\perp}\varphi + a(\Pi_{U ^\perp}\varphi)U$:
\[
S( U + \Pi_{U^\perp} \varphi + aU )
=
S(U) - \pp_t \Pi_{U^\perp} \varphi+  L_U(\Pi_{U^\perp} \varphi)  + \frac 1r \pp_r (\Pi_{U^\perp} \varphi )  +  N_U( \Pi_{U^\perp} \vp ) + c(\Pi_{U^\perp} \vp) U   \nonumber
\]
where for $\zeta =\Pi_{U^\perp} \vp $, $a = a(\zeta )$,
\begin{align}
\nonumber
L_U(\zeta )
&=  \zeta_{rr}+ \zeta_{zz}  + |\nabla U|^2 \zeta + 2(\nabla U\cdot  \zeta ) U
\\
\nonumber
N_U( \zeta )
&=
\big[
2 \nn (aU)\cdot \nn (U+ \zeta  )  + 2 \nabla U \cdot \nabla \zeta   + |\nabla \zeta  |^2
+ |\nabla (a U ) |^2 \,
\big] \zeta
- aU_t +   \frac ar \pp_rU
\\
& \quad
\nonumber
+ 2\nn a \nn U  ,
\\
\nonumber
c(\zeta )
& =   a_{rr}+ a_{zz} - a_t  + ( |\nn (U + \zeta  + aU)|^2
- |\nn U|^2 )(1 + a)  -   2\nn U\cdot \nn \zeta +  \frac 1r (\pp_r a) .
\end{align}
Since we just need to have an equation of the form \equ{bU} satisfied,  we find that
$$ \ttt u = U + \Pi_{U^\perp} \varphi + a(\Pi_{U^\perp} \varphi)U $$
solves \equ{hmf1}
if and only if $\vp$ satisfies
\begin{align}
\nonumber
0=
S(U) - \pp_t \Pi_{U^\perp} \varphi+  L_U(\Pi_{U^\perp} \varphi) + \frac 1r \pp_r (\Pi_{U^\perp} \varphi )  +  N_U(\Pi_{U^\perp}\vp ) + b(r,z,t) U ,
\end{align}
for some scalar function $b$.
\hide{
The logic of the construction goes like this:
 As we have said, we  decompose $\varphi$ into the sum of two functions $\vp = \vp^i + \vp^o$, the ``inner'' and ``outer'' solutions and reduce equation \equ{ecuacion}   to solving a  system of two equations in $(\vp^i, \vp^o)$ that we call the inner and outer problems.

\medskip
Using that $V= U_{\la, \xi, \omega}$ satisfies
 $$
V_{rr} + V_{zz}  + |\nn V|^2 V =0
 $$

\medskip
\begin{align*}
Q_{-\omega} L_U[\vp^i]
&=   \la^{-2} \eta   L_ W  [\phi]   +  ((\partial_r^2+\partial_z^2) \eta) \phi + 2 \la^{-1} \nn \eta \nn_y \phi
\\
Q_{-\omega} \vp^i_t
& =  \eta \bigl( \phi_t  - \la^{-1}\dot\la y\cdot \nn_y \phi
- \la^{-1}\dot\xi \cdot\nn_y \phi
+ \dot\omega   Q_{-\omega} \pp_\omega Q_\omega \phi  \bigr) + \eta_t \phi .
\end{align*}
}
We use the ansatz \equ{canave} for $\ttt u$, namely
\begin{align}
\label{upa}
\ttt u(r,z,t)\  =   U +   \Pi_{U^\perp}\vp  \, + \, a(\Pi_{U^\perp}\vp) U, \quad \vp := \Pi_{U^\perp} \big( \eta^\delta\, \Phi^0[\omega, \la , \xi] + \Psi^*\big ) + \eta_R Q_\omega \phi,
\end{align}
where we will later decompose $\Psi^* =  \ttt Z^* + \psi$ for a suitable $\ttt Z^*$.
Equation $S(\ttt u)= 0$  then becomes
\begin{align}
 \label{eqsys1}
0 & =  \la^{-2} \eta Q_\omega  [- \la^2 \phi_t +  \ L_ W  [\phi ] + \la^2 Q_{-\omega} \ttt L_U [\Psi^*] ]
\\
\nonumber
& \quad
+ \eta Q_\omega( \la^{-1}\dot\la  y\cdot \nn_y \phi
+ \la^{-1} \dot\xi \cdot\nn_y \phi  - \dot\omega J \phi )
\\
\nonumber
& \quad
+  \eta^{\delta}    \ttt L_U  [ \Phi^0  ]
+  \eta^{\delta} \Pi_{U ^\perp} [ - \pp_t \Phi^0 + (\partial_r^2+\partial_z^2) \Phi^0  + S(U) ]+ \EE^{out,0}
\\
\nonumber
& \quad - \pp_t \Psi^* +\Delta \Psi^*  +  (1-\eta) \ttt L_U [\Psi^*] +  Q_\omega[((\partial_r^2+\partial_z^2) \eta) \phi + 2  \nn \eta \nn \phi - \eta_t  \phi]
\\
\nonumber
& \quad +
\frac{1}{r} \partial_r
\left( \Pi_{U^\perp} \big( \eta^\delta\, \Phi^0[\omega, \la , \xi] + \Psi^*\big ) + \eta_R Q_\omega \phi
\right)
\\
\nonumber
& \quad +
N_U( \eta Q_\omega \phi  + \Pi_{U^\perp}( \Phi^0 +\Psi^*) ) + ((\Psi^*+ \Phi^0)\cdot U)U_t + b U ,
\end{align}
where
\begin{align*}
\EE^{out,0}
&= \tilde L_U[\eta^\delta \Phi^0]
+ \Pi_{U^\perp}[ (-\partial_t + \partial_r^2 + \partial_z^2)(\delta^\eta \Phi^0) ]
\\
& \quad - \eta^{\delta}    \ttt L_U  [ \Phi^0  ]
-  \eta^{\delta} \Pi_{U ^\perp} [ - \pp_t \Phi^0 + (\partial_r^2+\partial_z^2) \Phi^0   ] + (1-\eta^\delta) S(U).
\end{align*}
We note that from the definition \eqref{UUU} and the fact that $U_{\lambda(t),\xi(t),\omega(t)}$ satisfies the harmonic map equation \eqref{hm0},
we have
\begin{align*}
S(U) &= -U_t + \frac{1}{r} \partial_r U+ \EE^{out,1} , \quad | \EE^{out,1}| + |\nabla  \EE^{out,1}| \leq C \lambda.
\end{align*}
Invoking formulas \eqref{ZZ}  to compute $U_t$ we get
\begin{align*}
 U_t =   \dot\la \pp_\la U_{\la, \xi , \omega}  + \dot\omega  \pp_\omega U_{\la, \xi , \omega} +   \pp_{\xi } U_{\la, \xi , \omega}\cdot \dot \xi
 =   \EE_{0}   + \EE_{1} ,
 \end{align*}
where, setting $  y =  \frac{(r,z)-\xi }{\lambda}= \rho e^{i\theta}  $, we have
\begin{align*}
\EE_{0} (r,z,t)
& =  - Q_\omega [  \frac{\dot \la}{\la} \rho w_\rho(\rho)\,  E_1(y) \, + \,   {\dot \omega } \rho w_\rho(\rho)\,   E_2(y)\, ]
\\
\EE_{1} (r,z,t)
& = -\frac{\dot \xi_{1}}{\la} \, w_\rho(\rho)\, Q_\omega [\ \cos\theta \, E_1(y) + \sin\theta\,  E_2(y)  ]\,
\\
& \quad
- \frac{\dot \xi_{2}}{\la}\,w_\rho(\rho) \,  Q_\omega[ \sin\theta  \, E_1(y)  - \cos \theta\,  E_2(y) \, ].
\end{align*}
The choice \equ{defPhi0} of $\Phi^0$ is so that it cancels $\EE_0$ at main order. The other terms in $S(U)$ behave better,  since $\EE_1$ has faster space decay in $\rho $  and the other terms in $S(U)$ are smaller.
We note that
\begin{align*}
\EE_0(r,z,t) \approx
\ttt \EE_0 (r,z,t) :=  - \frac {2 s } {s^2+\la^2}\left [\begin{matrix} \dot p(t)e^{i\theta} \\ 0   \end{matrix}\right ] ,
\end{align*}
and a  direct computation yields
$$
\Phi^0_t + (\partial_r^2+\partial_z^2) \Phi^0  + \ttt \EE_0    =
\ttt \RR_0 +\ttt \RR_1 ,  \quad \ttt \RR_0 =  \left ( \begin{matrix} \RR_0   \\ 0 \end{matrix}   \right ),\quad
\ttt\RR_1 =  \left ( \begin{matrix} \RR_1   \\ 0 \end{matrix}   \right )
$$
where
\begin{align*}
\RR_0 &:=  - re^{i\theta}   \frac {\la^2}{z^4} \int_{-T}^t  \dot p(\tau)  ( z{k_z} - z^2 k_{zz}) (z(s),t-\tau) \, d\tau \\
\RR_1 & :=
- e^{ i\theta}  {\rm Re}\,( e^{-i\theta} \dot \xi(t))
 \int_{-T}^t  \dot p(\tau) \, k(z(s),t-\tau) \, d\tau
\\
&\qquad
+  \frac r{z^2} e^{i\theta} \, (\la\dot\la(t)  -  {\rm Re}\,( re^{i\theta} \dot\xi(t)) )
\int_{-T}^t  \dot p(\tau) \ {zk_z}(z(s),t-\tau)\,  d\tau.
\end{align*}
We observe that $\RR_1$ is actually a term of smaller order.
Using formulas \equ{Ltilde2},  \equ{uii} and the facts
 $$ \frac {\la^2r }{z^4} = \frac 1{4\la} \rho w_\rho^2, \quad  \frac r {z^2} (1-\cos w) = \frac 1{2\la}  \rho w_\rho^2 ,  $$
we derive an expression for the quantity:
\begin{align*}
&
 \ttt L_U  [ \Phi^0  ]  +
 \Pi_{U ^\perp} [- \pp_t \Phi^0 +(\partial_r^2+\partial_z^2) \Phi^0  + S(U) ]
\\
 & =   \ttt L_U[\Phi^0]  -\EE_1 + \Pi_{U^\perp} [\ttt \EE_0] - \EE_0 +
\Pi_{U^\perp} [\ttt \RR_0] +  \Pi_{U^\perp} [\ttt \RR_1]
\\
&=  \KK_{0}[p,\xi]  + \KK_1[p,\xi] +\Pi_{U^\perp} [\ttt \RR_1]
+ +\Pi_{U^\perp} \Bigl[\frac{1}{r}U + \EE^{out,1} \Bigr]
\end{align*}
where
\begin{align*}
\KK_{0}[p,\xi] =  \KK_{01}[p,\xi] + \KK_{02}[p,\xi]
\end{align*}
with
\begin{align}
\nonumber
\KK_{01}[p,\xi]
&:= - \frac {2}{\la} \rho w_\rho^2
\int_{-T} ^t  \left [ {\rm Re  } \,( \dot p(\tau) e^{-i\omega(t)} )   Q_\omega E_1+
 {\rm Im  } \,( \dot p(\tau) e^{-i\omega(t)} ) Q_\omega E_2   \right ]
\\
\label{K01}
& \qquad \qquad \qquad \qquad \cdot  k(z,t-\tau)  \, d\tau
\\
\nonumber
\KK_{02}[p,\xi]
&
:=  \frac 1{\la} \rho w_\rho^2  \left [  {\dot\la}
-
\int_{-T} ^t  {\rm Re  } \,( \dot p(\tau) e^{-i\omega(t)} ) r k_z(z,t-\tau) z_r \, d\tau\, \right]  Q_\omega E_1
\\
\nonumber
&\quad
-   \frac{1}{4\lambda} \rho w_\rho^2 \cos w  \left [  \int_{-T}^t   {\rm Re}\, ( \dot p(\tau)e^{-i\omega(t) }  )
\, ( z{k_z} - z^2 k_{zz}) (z,t-\tau)\, d\tau\, \right ]  Q_\omega E_1
\\
\label{K02}
&\quad
-    \frac{1}{4\lambda} \rho w_\rho^2  \left [  \int_{-T}^t   {\rm Im }\, ( \dot p(\tau)e^{-i\omega(t) }  )
\, ( z{k_z} - z^2 k_{zz}) (z,t-\tau)\, d\tau\,  \right ]  Q_\omega E_2  ,
\\
\label{K1}
\KK_{1}[p,\xi]
& :=
\frac 1\la  w_\rho \, \big [
\Re \big (  (\dot  \xi_1 - i \dot \xi_2)  e^{i\theta } \big ) Q_\omega E_1
+ \Im \big(  (\dot  \xi_1 - i \dot \xi_2)  e^{i\theta } \big ) Q_\omega E_2       \big ].
\end{align}

We insert this decomposition in equation \equ{eqsys1}
and see that we  will have a solution to the equation if the pair $(\phi,\Psi^*)$ solves
the {\em inner-outer gluing system}
\begin{align}
\label{inner1}
\left\{
\begin{aligned}
\lambda^2 \phi_t  & =  L_ W  [\phi ]
+ \lambda^2  Q_{-\omega} \left[
\tilde L_U  [\Psi^* ]
+ \KK_{0}[p,\xi]+ \KK_{1}[p,\xi] \right]
+ \lambda^2 \chi_{D_{2R}} \frac{1}{r}Q_{-\omega}  \partial_r U
\quad \text{in } D_{2R}
\\
\phi\cdot  W  & =  0   \inn D_{2R}
\\
\phi(\cdot, 0)  & = 0=\phi(\cdot, T),
\end{aligned}
\right.
\end{align}
\smallskip
\begin{align}
\label{outer1}
\partial_t \Psi^* &=  (\partial_r^2+\partial_z^2) \Psi^*  +  g[p,\xi, \Psi^*,\phi]\inn \DD \times (0,T) ,
\end{align}
where $\chi_A$ is characteristic function of a set $A$,
\begin{align}
\label{GG}
 g[p,\xi, \Psi^*,\phi]  & :=
 (1-\eta) \ttt L_U [\Psi^*] + (\Psi^*\cdot U ) U_t
\\
\nonumber
& \quad +
Q_\omega
\bigl( ((\partial_r^2+\partial_z^2) \eta) \phi + 2  \nn \eta \nn \phi - \eta_t  \phi
\bigr)
\\
\nonumber
& \quad +  \eta Q_\omega\bigl( - \dot\omega J \phi  +  \la^{-1}\dot\la  y\cdot \nn_y \phi + \la^{-1} \dot\xi \cdot\nn_y \phi \bigr)
\\
\nonumber
& \quad + (1-\eta)[ \KK_{0}[p,\xi]+ \KK_{1}[p,\xi]] + \Pi_{U^\perp}[ \ttt \RR_1] + ( \Phi^0\cdot U)U_t
\\
\nonumber
& \quad  +
\frac{1}{r} \partial_r
\left( \Pi_{U^\perp} \big( \eta^\delta\, \Phi^0[\omega, \la , \xi] + \Psi^*\big ) + \eta_R Q_\omega \phi
\right)
+ (1-\eta) \frac{1}{r} \partial_r U + \eta^\delta \EE^{out,1} + \EE^{out,0}
\\
\nonumber
& \quad   +  N_U( \eta Q_\omega \phi  + \Pi_{U^\perp}( \Phi^0 +\Psi^*) ) ,
\end{align}
and we denote
$$
D_{\gamma R} = \{(y,t)\in \R^2\times (0,T) \ /\ |y|< \gamma R(t) \}.
$$
Indeed if $(\phi,\Psi^*)$ solves this system, then $\tilde u$ given by \eqref{upa} solves equation \equ{hmf1}.
The boundary condition
$\tilde u=  {\bf e}_3$
on $ (\pp \DD\setminus \{r=0\}) \times (0,T) $
amounts to
$$
\Pi_{U^\perp} [ \Phi^0+  \Psi^*  ] +  a(\Pi_{U^\perp} [U+ \Phi^0+  \Psi^*  ]) U  =  ({\bf e}_3 - U)
$$
and then it suffices that we take the boundary condition for \equ{outer1}:
\begin{align}
\label{bcpsi}
  \Psi^*=  {\bf e}_3 - U -\Phi^0
  \quad \text{on } (\pp \DD\setminus \{r=0\}) \times (0,T) .
\end{align}
We also impose
\begin{align}
\label{bcpsi2}
\partial _r \Psi^* =0 \quad \text{on }  \{r=0\} \cap \DD \times (0,T).
\end{align}

Since we want  $\tilde u(r,z,t)$ to be a small perturbation of $U(x,t)$ when we stand close to $(r_0,z_0,T)$,  it is natural to require that $\Psi^*$ satisfies the final condition
\[
\Psi^* (r_0,z_0,T) = 0.
\]
 This constraint amounts to three Lagrange multipliers when we solve the problem, which we choose to put in the initial condition. Then we assume
\[
\Psi^*\big(r,z,0)  = Z_0^*(x)  +  c_1{\mathbf e_1} +  c_2{\mathbf e_2} + c_3{\mathbf e_3} ,
\]
 where $c_1,c_2,c_3$ are undetermined constants and $Z_0^*(x)$ is a small function for which specific assumptions will later be made.

\section{The reduced equations}
In this section we will informally discuss the procedure to achieve our purpose in particular deriving the order of vanishing of the scaling parameter $\la(t)$ as  $t\to T$.

The main term that couples equations \equ{inner1} and \equ{outer1} inside the second equation is
the linear expression
\[
Q_\omega[((\partial_r^2+\partial_z^2) \eta) \phi + 2  \nn \eta \nn \phi+ \eta_t  \phi],
\]
which is supported in $|y|= O(R)$.
This motivates the fact that we want $\phi$ to exhibit some type of space decay in $|y|$ since in that way $\Psi^*$ will eventually be smaller and in turn that would make the two equations at main order {\em uncoupled}.
Equation \equ{inner1} has the form
\begin{align*}
\la^2 \phi_t  & =  L_ W  [\phi ] +   h[p,\xi, \Psi^*] (y,t)  \inn D_{2R}
\\
\phi\cdot  W  & =  0   \inn D_{2R}
\\
\phi(\cdot, 0) & = 0 \inn B_{2R(0)}  ,
\end{align*}
where, for convenience we assume that $h(y,t)$ is defined for all $y\in \R^2$ extending it outside $D_{2R}$ as
\begin{equation}
\label{HH2}
h[p,\xi, \Psi^*] =
\lambda^2  Q_{-\omega}  \KK_{0}[p,\xi]
+ \lambda^2  Q_{-\omega}
\left[
\tilde L_U  [\Psi^* ]
+ \KK_{1}[p,\xi]
+ \frac{1}{r} \partial_r U
\right] \chi_{D_{2R} } ,
\end{equation}
where
$\KK_0$ is defined in \eqref{K01}, \eqref{K02} and $\KK_1$ in \eqref{K1}.
If $\lambda	(t)$ has a relatively smooth vanishing as $t\to T$ it seems natural that the term $\la^2 \phi_t $ be of smaller order and then the equation is
approximately represented by the elliptic problem
\begin{align}
\label{linearized-elliptic}
L_ W  [\phi ] +   h[p,\xi, \Psi^*]=0, \quad \phi\cdot W  =0  \inn \R^2 .
\end{align}

Let us consider the decaying functions $Z_{lj}(y)$ defined in formula \eqref{ZZ}, which satisfy $L_ W [Z_{lj}]=0$.
If $\phi(y,t)$ is a solution of \equ{linearized-elliptic} with sufficient decay, then necessarily
\be\label{ww1}
\int_{\R^2 }   h[p,\xi, \Psi^*](y,t)\cdot Z_{lj} (y)\, dy = 0  \quad \foral t\in (0,T) ,
\ee
for $l=0,1$, $j=1,2$.
These relations amount to an integro-differential system of equations for $p(t)$, $\xi(t)$, which, as a matter of fact, {\em detemine} the correct values of the parameters so that the solution $(\phi,\Psi^*)$ with appropriate asymptotics exists.

\medskip
We derive next useful expressions for relations \equ{ww1}.   Let us first define
\begin{align}
\label{defB0j}
\mathcal B_{0j} [p] (t) &:=
\frac{\la}{2\pi}
\int_{\R^2}   Q_{-\omega}
[ \KK_{0}[p,\xi]+ \KK_{1}[p,\xi]
] \cdot Z_{0j} (y)\, dy.
\\
\nonumber
\tilde {\mathcal B}_{0j}[p,\xi] & :=
\frac{\la}{2\pi}
\int_{B_{2R}}   Q_{-\omega}
( \frac{1 }{r} \partial_r U )\cdot Z_{0j} (y)\, dy
\end{align}
Using \eqref{K01}, \eqref{K02} the following expressions for $\mathcal B_{01}$, $\mathcal B_{02}$ are readily obtained:
\begin{align*}
\mathcal B_{01} [p](t)
&=
  \int_{-T} ^t  {\rm Re  } \,(\dot p(\tau) e^{-i\omega(t)} )\,
\Gamma_1 \left ( \frac {\la(t)^2}{t-\tau}   \right )  \,\frac{ d\tau}{t-\tau}\,  -2 \dot\la (t)
\\
 \mathcal B_{02}[p](t)
& =
 \int_{-T} ^t  {\rm Im  } \,(\dot p(\tau) e^{-i\omega(t)} )\,
\Gamma_2 \left ( \frac {\la(t)^2}{t-\tau}   \right )  \,\frac{ d \tau}{t-\tau}\,
\end{align*}
where  $\Gamma_j(\tau)$, $j=1,2$  are the smooth functions
defined as follows:
\begin{align*}
\Gamma_1 (\tau)
&
=  - \int_0^{\infty} \rho^3 w^3_\rho \left [  K ( \zeta )
+ 2 \zeta K_\zeta (\zeta ) \frac {\rho^2} { 1+ \rho^2}
-4\cos(w) \zeta^2 K_{\zeta\zeta} (\zeta)
\right ]_{\zeta = \tau(1+\rho^2)}   \, d\rho
\\
\Gamma_2 (\tau) & =
- \int_0^{\infty} \rho^3 w^3_\rho  \left [K(\zeta)   - \zeta^2 K_{\zeta\zeta}(\zeta) \right ]_{\zeta = \tau(1+\rho^2)}
\, d\rho\,
\end{align*}
where
\[
 K(\zeta)  =  2\frac {1- e^{-\frac{\zeta}4}} {\zeta} ,
\]
and we  have used that $\int_0^{\infty} \rho^3 w_\rho^3 d\rho=-2$. Using these expressions we find that
\begin{align}
\nonumber
| \Gamma_l (\tau)- 1|
& \le   C \tau(1+  |\log\tau|) \quad  \hbox{ for }\tau<1 ,
\\
\nonumber
|\Gamma_l (\tau)|
& \le  \frac C\tau\qquad \qquad\hbox{ for }\tau> 1, l=1,2.
\end{align}

Let us define
\begin{align}
\label{defB0-new}
\mathcal B_0[p ] :=
\frac{1}{2}e^{i \omega(t) }
\left(
\mathcal B_{01}[p ]
+  i\mathcal B_{02}[p ] \right)
,\quad
\tilde {\mathcal B}_0[p ] :=
\frac{1}{2}e^{i \omega(t) }
\left(
\tilde {\mathcal B}_{01}[p ]
+  i\tilde {\mathcal B}_{02}[p ] \right)
\end{align}
and
\begin{align}
\nonumber
a_{0j}[p,\xi, \Psi^*]
&:=
- \frac{ \la}{2\pi} \int_{B_{2R}}   Q_{-\omega} \ttt L_U  [\Psi^* ]  \cdot Z_{0j} (y)\, dy, j=1,2,
\\
\label{defA0}
a_{0}[p,\xi, \Psi^*]
& :=
\frac{1}{2}
e^{i \omega(t)} \left( a_{01}[p,\xi, \Psi^*] + i a_{02}[p,\xi, \Psi^*] \right).
\end{align}

\noanot{ 
\begin{ch}
\cb
Maybe change to
\begin{align}
\label{defA0-new}
a_{0}[p,\xi, \Psi^*]
=-\frac{\lambda}{4\pi} e^{i \omega(t)}
\int_{B_{2R} }
\left(
Q_{-\omega(t)} \tilde L_U[\Psi^*] \cdot Z_{01}
+ i Q_{-\omega(t)} \tilde L_U[\Psi^*] \cdot Z_{02}
\right)\,dy .
\end{align}
\end{ch}
} 

Similarly, we let
\begin{align*}
\mathcal B_{1j} [p,\xi ] (t)  & :=
\frac{\la}{2\pi} \int_{\R^2}   Q_{-\omega}
\Bigl[
\KK_{0}[p,\xi]+ \KK_{1}[p,\xi]
+\chi_{D_{2R}} \frac{1}{r}\partial_r U
\Bigr] \cdot Z_{1j} (y)\, dy, j=1,2, \\
\mathcal B_{1} [p,\xi ] (t) & :=   \mathcal B_{11}[p,\xi](t) + i \mathcal B_{12}[p,\xi](t)  .
\end{align*}
At last, we set
\begin{align*}
a_{1j} [p,\xi, \Psi^* ] &:= \frac{\lambda}{2\pi}
\int_{B_{2R}}
Q_{-\omega}
\ttt L_U  [\Psi^* ]
\cdot Z_{1j}(y)  \,dy, \quad j=1,2,
\\
a_1[p,\xi, \Psi^* ] & := - e^{i \omega(t) } ( a_{11}[p,\xi, \Psi^* ] + i a_{12} [p,\xi, \Psi^* ] ) .
\end{align*}

We get that
the four conditions \equ{ww1}  reduce to the system of two complex equations
\begin{align}
\label{eqB0}
\mathcal B_0[p ]& = a_0[p,\xi,\Psi^* ] - \tilde{\mathcal B}_0[p,\xi ]   ,\\
\label{eqB1}
\mathcal B_1[\xi ] & =    a_1[p,\xi,\Psi^* ].
\end{align}

At this point we will make some preliminary considerations on this system that will allow us to find a first guess of the parameters $p(t)$ and $\xi(t)$.
First, we observe that
\begin{align*}
\mathcal B_0[p ]
=   \int_{-T} ^{t-\la^2}    \frac{\dot p(\tau)}{t-\tau}d\tau\, + O\big( \|\dot p\|_\infty \big) ,
\quad
\tilde{\mathcal B}_0[p,\xi] = O(\lambda^{1-\sigma}),
\end{align*}
for any $\sigma>0$.

To get an approximation for $a_0$, let us write
$$
\Psi^* =  \left [ \begin{matrix}\psi^* \\  \psi^*_3  \end{matrix}   \right ] , \quad \psi^* = \psi^*_1 + i \psi^*_2 .
$$
From formula \equ{Ltilde2} we find that
\[
\ttt L_U [\Psi^* ](y)  =  [\ttt L_U]_0 [\Psi^* ]   +  [\ttt L_U]_1 [\Psi^* ]+ [\ttt L_U]_2 [\Psi^* ] ,
\]
 where
\begin{align*}
\la Q_{-\omega} [\ttt L_U]_0 [\Psi^* ]& =   \ \   \rho w_\rho^2\, \big [\, \div ( e^{-i\omega} \psi^*)\,  E_1   + \curl ( e^{-i\omega} \psi^* )\, E_2
  \, \big ]\,
 \\
\la Q_{-\omega}[\ttt L_U]_1 [\Psi^* ] & =
   -\,
  2 w_\rho  \cos w \,  \big [\,(\pp_{r} \psi^*_3) \cos \theta +    (\pp_{z} \psi^*_3) \sin  \theta \, \big ]\, E_1
\\
& \quad - 2 w_\rho  \cos w  \, \big [\, (\pp_{r} \psi^*_3) \sin \theta -    (\pp_{z} \psi^*_3) \cos  \theta \, \big ]\,  E_2\ ,
\\
 \la Q_{-\omega}[\ttt L_U]_2 [\Psi^* ] &=   \quad
 \rho w_\rho^2 \,  \big [\, \div (e^{i\omega}\bar \psi^*)\, \cos 2\theta  -   \curl ( e^{i\omega}\bar \psi^*)\, \sin 2\theta  \, \big ]\,  E_1
\\
&  \quad
+   \rho w_\rho^2 \, \big [\,  \div ( e^{i\omega}\bar \psi^*)\, \sin 2\theta +   \curl  ( e^{i\omega}\bar \psi^*)\,  \cos 2\theta   \, \big ]\,E_2 ,
\end{align*}
and the differential operators in $\Psi^*$  on the right hand sides
are evaluated at $(r,z,t)$ with   $(r,z)= \xi(t)+ \la(t) y$,  $y = \rho e^{i\theta}$ while $E_l= E_l(y)$, $l=1,2$.
From the above decomposition, assuming that $\Psi^*$ is of class $C^1$ in space variable, we find that
\[
a_{0}[p,\xi, \Psi^*] =   [   \div \psi^*+  i\curl \psi^*](\xi,t )  + o(1) ,
\]
where $o(1)\to 0$ as $t\to T$.

Similarly, we have that
\begin{align*}
a_1(p,\xi) & =   2  ( \pp_{r} \psi^*_3 + i\pp_{z} \psi^*_3) (\xi, t) \int_{0}^\infty \cos w \,w_\rho^2 \rho \, d\rho   =  o(1) \ass t\to T,
\end{align*}
since  $\int_0^\infty w_\rho^2 \cos w \rho \, d\rho = 0 $.

Using \equ{K1}, \eqref{ZZ} and the fact that  $\int_0^{\infty} \rho w_\rho^2 d\rho\, =2$ we get
\[
\mathcal B_{1} [\xi ](t)\,  =  \, 2[\, \dot \xi_1(t) + i\dot \xi_2(t)\,]
+ \frac{2}{\xi_1(t)} + O(\lambda^{\sigma}),
\]
for some $\sigma>0$ (actually $\sigma = 2\beta$ where $R \approx \lambda^\beta$.)

 \medskip

Let us discuss informally how to handle  \equ{eqB0}-\equ{eqB1}.
For this we simplify this system in the form
\begin{align}
\nonumber
\int_{-T} ^{t-\la^2}     \frac{\dot p(\tau)}{t-\tau}d\tau
& =
[ \div \psi^*+  i\curl \psi^*](\xi(t),t )  + o(1) + O(\|\dot p\|_\infty)  \\
\dot \xi_1(t) &  =  -\frac{1}{\xi_1(t)} +o(1)\ass t\to T. \label{equB1} \\
\dot \xi_2(t) &  =  o(1)\ass t\to T. \label{equB1-2}
\end{align}
We assume for the moment that the function $\Psi^*(x,t)$ is fixed, sufficiently regular, and we regard $T$ as a parameter that will always be taken smaller if necessary. Recall that we want $\xi(T)=(r_0,z_0) $ where $(r_0,z_0) \in \DD$, $r_0\not=0$ is given, and $\lambda(T)=0$.  Equation  \equ{equB1-2} suggests us
to take  $\xi_2(t) \equiv z_0$ as a first approximation,
while \eqref{equB1} suggest that $\xi_1$ is given at main order by
\[
\xi_1(t) = \sqrt{r_0^2 + 2 (T-t)}.
\]
Neglecting lower order terms, we arrive at the ``clean'' equation for $p(t)= \lambda (t) e^{i\omega(t)}$,
\begin{align}
\label{kuj}
\int_{-T} ^{t-\la(t)^2} \frac{ \dot p(s)}{t-s}ds   =
a_0^*
\end{align}
where $a_0^* = \div \psi^*(q,0 ) + i\curl \psi^*(q,0 )  $.
At this point we make the following assumption:
\begin{align}
\label{div+icurl-not-0}
\div \psi^*(q,0 ) + i\curl \psi^*(q,0 ) \not=0.
\end{align}
We claim that a good approximate solution of \equ{kuj} as $t\to T$  is given by
\[
\dot p(t) =  -\frac {\kappa} {\log^2(T-t)}
\]
for a suitable $\kappa \in \C$. In fact, substituting, we have
\begin{align}
\int_{-T}^{t-\lambda(t)^2} \frac {\dot p(s)}{t-s}\, ds & =
\int_{-T}^{t-  (T-t)  } \frac{ \dot p (s)}{t-s} \, ds +     \, \dot  p (t)\left [ \log (T-t)   - 2\log (\la(t)) \right ]\nonumber
 +   \int_{t-(T-t)   } ^{ t- \la(t)^2}\frac{\dot p(s)-\dot `p(t)}{t-s} ds    \nonumber \\
 \label{formal}
&  \approx
\int_{-T}^{t } \frac{ \dot p (s)}{T-s}\, ds -  \dot p (t) \log (T-t)
\end{align}
as $t\to T$. We see that by the explicit form of $p$,
\begin{align*}
\frac{d}{dt}
\left[
\int_{-T}^{t } \frac{ \dot p (s)}{T-s}\, ds -  \dot p (t) \log (T-t)
\right]=0 ,
\end{align*}
and hence the right hand side of \eqref{formal} is constant.
As a conclusion, equation \equ{kuj}
is approximately satisfied if $\kappa$ is such that
$$
\kappa \int_{-T}^{T} \frac{ \dot p   (s)}{T-s}=a_0^* .
$$
Imposing $p(T)=0$ we  gives us the approximate expression
\[
 p(t) (t) =  a_0^* \frac { |\log T| (T-t)}{\log^2(T-t)}(T-t)\, (1+ o(1)) \ass t\to T.
\]

\section{Solving the inner-outer gluing system}
Our purpose is to determine, for a given $(r_0,z_0)\in \DD$  and a sufficiently small $T>0$, a solution $(\phi,\Psi^*)$ of system \equ{inner1}-\equ{outer1} with a boundary condition of the form
\equ{bcpsi}, \eqref{bcpsi2} such that  $\tilde u(r,z,t)$ given by \equ{upa} blows up with $U(x,t)$ as its main order profile.
This will only be possible for adequate choices
of the  parameter functions $\xi(t)$ and $p(t)= \la(t) e^{i\omega(t)}$.
These functions
will eventually be found by fixed point arguments, but a priori we need to make some assumptions regarding their behavior.

First, we define
\begin{align*}
\lambda_*(t)  = \frac{|\log T| ( T-t)}{ |\log(T-t)|^2}.
\end{align*}
We will assume that for some  positive numbers $a_1,a_2,\sigma$  independent of $T$ the following hold:
\begin{align}
a_1 |\dot \la_* (t)| \le    |\dot p (t)| &  \le    a_2 |\dot \la_* (t)| \foral t\in (0,T),
\nonumber
\\
 |\dot \xi(t) |  & \le   \la_*(t)^{\sigma}\quad\ \foral t\in (0,T).
\nonumber
\end{align}
We also take
\begin{align*}
R(t)  =\la_*(t)^{-\beta},
\end{align*}
where  $\beta \in ( 0,\frac 12)$.

\medskip
To solve the outer equation \eqref{outer1} we will decompose $\Psi^*$ in the form $ \Psi^* =  \tilde Z^* + \psi $ where we let $Z^*:\Omega\times (0,\infty) \to \R^3$ satisfy
\eqref{heatZ*}
with $Z_0^*(x)$  a function satisfying certain conditions to be described below.
Since we would like that  $\tilde u(r,z,t)$ given by \equ{upa} has a blow-up behavior given at main order by that of $U(x,t)$, we will require
\[
\Psi^*(r_0,z_0,T)=0 .
\]
This constraint has three parameters. Therefore we need three ``Lagrange multipliers'' which we include in the initial datum.

\subsection{Assumptions on  \texorpdfstring{$Z_0^*$}{Z0star}}
Let us recall that $Z^*$ solves the  heat equation \eqref{heatZ*} with initial condition $Z_0^*$. We assume first that $Z_0^*$  is axially symmetric  so that
$Z_0^*(x) = \tilde Z_0^*(r,z)$
and use the notation \eqref{notationZ0star} and \eqref{div-curl-z0star}.
A first condition that we require, consistent with \eqref{div+icurl-not-0},  is
$ \div  \tilde z^*_0(q) + i \curl  \tilde z^*_0(q) \not=0$.
In addition we require that $\tilde Z_0^*(r_0,z_0)\approx 0 $ in a non-degenerate way.
We want also $Z^*$ to be sufficiently small, but independently of $T$, so that the heat equation \eqref{heatZ*} is a good approximation of the linearized harmonic map flow far from the singularity.
More precisely, we assume that
for some $\alpha_0>0$ small and some $\alpha_1>0$, all independent of $T$, we have
\begin{align}
\label{condZ0}
\left\{
\begin{aligned}
& \|Z_0^*\|_{C^3(\overline \Omega)}  \le  \alpha_0, \\
& | \tilde Z_0^{*}(r_0,z_0)|   \le  5T,  \\
& |(D \tilde z_0^{*}(r_0,z_0))^{-1}|   \le \alpha_1 ,  \\
& \alpha_0  \leq | \div \tilde z_0^{*}(r_0,z_0)  + i \curl \tilde z_0^{*}(r_0,z_0)  | .
\end{aligned}
\right.
\end{align}
(The notation here is analogous to \eqref{notationZ0star} and \eqref{div-curl-z0star}.)

\medskip
\noanot{ 
\crr

OLD ASSUMPTIONS

More precisely, we consider positive numbers $\alpha_0$, $\alpha_1$, $\alpha_2$, all of them  independent of $T$, with $\alpha_0$ sufficiently small
\begin{align}
\label{conditionsZ0}
\left\{
\begin{aligned}
\|Z_0^*\|_{C^1(\Omega)}  +  T|\log T|^{-1} \|D^2 Z_0^*\|_{L^\infty (\Omega)}
& \le  \alpha_0,
\\
|Z_0^*(q)|
& \le  5T,
\\
|(Dz_0(q))^{-1}|
& \le -\alpha_1,
\\
\div z_0^*(q)  & \le   - \alpha_2.
\end{aligned}
\right.
\end{align}
}  

\medskip

\subsection{Linear theory for the inner problem}
The inner problem \equ{inner1} is written as
\begin{align*}
\left\{
\begin{aligned}
\la^2 \pp_t \phi  & =  L_ W  [\phi ] +   h[p,\xi, \Psi^*]   \inn D_{2R}
\\
\phi  \cdot  W  & =  0   \inn D_{2R}
\\
\phi (\cdot, 0) & = 0 \inn B_{2R(0)}
\end{aligned}
\right.
\end{align*}
where  $h[p,\xi, \Psi^*] $ is given by \equ{HH2}.
To find a good solution to this problem we would like that $ h[p,\xi, \Psi^*] $ satisfies the orthogonality conditions \eqref{ww1}.

We split  the right hand side $h[p,\xi, \Psi^*] $  and the inner solution  into components with different roles regarding these orthogonality conditions.

Recall that
\begin{equation*}
h[p,\xi, \Psi^*] = \lambda^2  Q_{-\omega} \tilde L_U  [\Psi^* ] \chi_{D_{2R} }
+ \lambda^2  Q_{-\omega} \KK_{0}[p,\xi]
+ \lambda^2  Q_{-\omega}  \KK_{1}[p,\xi]  \chi_{D_{2R} }    ,
\end{equation*}
the decomposition of $\tilde L_U$ given in \eqref{Ltilde2}:
\begin{align}
\nonumber
\ttt L_U [\Psi^* ]
=  \tilde L_U [\Psi^* ]_0  +  \tilde L_U [ \Psi^* ]_1 +  \tilde L_U [ \Psi^* ]_2\ ,
\end{align}
with $\tilde L_U[\Phi]_j$ defined in \eqref{Ltilde-j}.
Using the notation \eqref{notation-Phi}, we then define
\begin{align}
\nonumber
\tilde L_U [\Phi ]_1^{(0)} & =
   -\,
  2\la^{-1} w_\rho  \cos w \, \big [\,(\partial_{x_1} \varphi_3(\xi(t),t)) \cos \theta +    (\partial_{x_2} \varphi_3(\xi(t),t))) \sin  \theta \, \big ]\,Q_{\omega} E_1
\\
\nonumber
& \quad - 2\la^{-1}  w_\rho  \cos w  \, \big [\, (\partial_{x_1} \varphi_3(\xi(t),t))) \sin \theta -    (\partial_{x_2} \varphi_3(\xi(t),t))) \cos  \theta \, \big ]\, Q_{\omega}E_2\ .
\end{align}
We then decompose the function $h$ defined in \eqref{HH2}
\[
h = h_1+ h_2 + h_3
\]
where
\begin{align}
\label{def-h1}
h_1[p,\xi, \Psi^*]
&=
\lambda^2  Q_{-\omega} (
\tilde L_U  [\Psi^* ]_0
+\tilde L_U  [\Psi^* ]_2 ) \chi_{D_{2R} }
+ \lambda^2  Q_{-\omega} \KK_{0}[p,\xi] ,
\\
\nonumber
h_2[p,\xi, \Psi^*] &=
\lambda^2  Q_{-\omega} \tilde L_U  [\Psi^* ]_1^{(0)} \chi_{D_{2R} }
+ \lambda^2  Q_{-\omega}  \KK_{1}[p,\xi]  \chi_{D_{2R} },
\\
\nonumber
h_3[p,\xi, \Psi^*] &=   \lambda^2  Q_{-\omega}
( \tilde L_U [\Psi^* ]_1 - \tilde L_U  [\Psi^* ]_1 ^{(0)})  \chi_{D_{2R} }     .
\end{align}

Next we decompose  $\phi = \phi_1+ \phi_2 + \phi_3+\phi_4$. The function $\phi_1$ will solve the inner problem with right hand side $  h_1[p,\xi, \Psi^*]   $ projected so that it satisfies essentially \eqref{ww1}.
The advantage of doing this is that $h_1$ has faster spatial decay, which gives better bounds for the solution.
For this we let, for any function $h(y,t)$ defined in $\R^2 \times (0,T)$ with sufficient decay,
\begin{align}
\label{defCij}
c_{lj}[h](t)  :=     \frac 1 { \int_{\R^2} w_\rho^2 |Z_{lj}|^2  } \int_{\R^2} h (y  ,t)\cdot Z_{lj}(y)\, dy .
\end{align}
Note that  $h[p,\xi, \Psi^*] $  is defined in $\R^2\times (0,T)$, and for simplicity we will assume that the right hand sides appearing in the different linear equations are always defined  in $\R^2\times (0,T)$.

We would like that  $\phi_1$ solves
\begin{align*}
\lambda^2 \pp_t \phi_1  &=  L_ W  [\phi_1 ] +   h_1[p,\xi, \Psi^*]
- \sum_{l=-1}^1
\sum_{j=1}^2 c_{lj}[h_1(p,\xi, \Psi^*)] w_\rho^2 Z_{lj}  \inn D_{2R} ,
\end{align*}
but the estimates for $\phi_1$ are better if the projections $c_{0j}[h(p,\xi, \Psi^*)] $ are modified slightly.

Here is the precise result that we will use later.
We define the norms
\begin{align}
\label{norm-h}
\|h\|_{\nu,a}  =
\sup_{\R^2 \times (0,T)} \  \frac{ |h(y,t)| }{ \lambda_*^\nu (1+|y|)^{-a}} ,
\end{align}
and
\begin{align}
\label{norm-phi1}
\| \phi \|_{*,\nu,a,\delta}
=
\sup_{D_{2R}}
\frac{| \phi(y,t) | + (1+|y|) |\nabla_y \phi(y,t)|}{ \lambda_*^\nu \max(\frac{R^{\delta(5-a)}}{(1+|y|)^3} , \frac{1}{(1+|y|)^{a-2} })} .
\end{align}

\begin{prop}
\label{prop1.0}
Let $a \in (2,3)$, $\delta \in (0,1)$, $\nu>0$.
Assume $\| h \|_{\nu,a}<\infty$. Then there is a solution $\phi =  \TT_{\lambda,1} [h]$,   $\tilde c_{0j}[h]$  of
\begin{align}
\nonumber
\left\{
\begin{aligned}
\lambda^2 \partial_t \phi
& =  L_ W  [\phi ] +   h
-  \sum_{  j=1,2} \tilde c_{0j}[h] Z_{0j} \chi_{B_1}
-  \sum_{ \substack{l=-1,1\\ j=1,2}} c_{lj}[h] Z_{lj} \chi_{B_1}
\quad \text{in } D_{2R}
\\
\phi\cdot  W  & =  0   \quad \text{in } D_{2R}
\\
\phi(\cdot, 0) & = 0 \quad \text{in } B_{2R(0)}
\end{aligned}
\right.
\end{align}
where $c_{lj}$ is defined in \eqref{defCij}, which is  linear in $h$, such that
\begin{align*}
\| \phi \|_{*,\nu,a,\delta}
\leq C \|h\|_{\nu,a}
\end{align*}
and such that
\begin{align}
\nonumber
|c_{0j}[h]  - \tilde c_{0j}[h]| \leq
C  \lambda_*^{\nu}  R^{-\frac{1}{2}\delta(a-2)} \| h \|_{\nu,a}.
\end{align}
\end{prop}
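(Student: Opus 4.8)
The plan is to construct $\phi = \TT_{\lambda,1}[h]$ by the standard two-step procedure used in gluing constructions of this type (as in \cite{ddw}): first solve the linear \emph{elliptic} problem $L_W[\phi] + h - \sum_{lj}(\ldots)Z_{lj}\chi_{B_1} = 0$ with the full set of orthogonality conditions built in, and then treat the parabolic term $\lambda^2\partial_t\phi$ as a perturbation via a fixed-point argument in the weighted space defined by $\|\cdot\|_{*,\nu,a,\delta}$. The key structural fact to exploit is that after projecting off the kernel elements $Z_{lj}$ (which span the cokernel of $L_W$ on suitably decaying functions, by the analysis built on \eqref{ZZ}), the operator $L_W$ is invertible on the orthogonal complement; moreover it gains two powers of $|y|$ in decay, so an error bounded by $\lambda_*^\nu(1+|y|)^{-a}$ with $a\in(2,3)$ produces a solution that decays like $\lambda_*^\nu(1+|y|)^{-(a-2)}$ — which is exactly the second argument of the $\max$ in \eqref{norm-phi1}. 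The first term $\lambda_*^\nu R^{\delta(5-a)}(1+|y|)^{-3}$ in that norm accounts for the slowest-decaying kernel-type mode that survives because the cutoffs $\chi_{B_1}$ are compactly supported rather than matching the natural decay; this is the term one must carry along carefully.

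Concretely, I would first establish an a priori estimate: if $\phi$ solves the elliptic problem $L_W[\phi] = g$ in $\R^2$ with $\phi\cdot W = 0$, $g$ satisfying $\int g\cdot Z_{lj} = 0$ for all $l=-1,0,1$, $j=1,2$, and $\|g\|_{\nu,a}<\infty$ with $a\in(2,3)$, then $\|\phi\|_{*,\nu,a,\delta}\le C\|g\|_{\nu,a}$. This is done mode by mode using the Fourier decomposition in $\theta$ (the angular modes $e^{ik\theta}$ decouple under $L_W$), reducing to ODEs in $\rho$ whose solutions are controlled by variation of parameters; the two-power decay gain and the appearance of the $R^{\delta(5-a)}(1+|y|)^{-3}$ tail come out of this ODE analysis together with the finite domain $D_{2R}$. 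Here the choice of $\tilde c_{0j}[h]$ (a slight modification of $c_{0j}[h]$) is what makes the $j$, $l=0$ modes solvable with the stated decay while keeping $|c_{0j}[h]-\tilde c_{0j}[h]|$ small — the bound $C\lambda_*^\nu R^{-\delta(a-2)/2}\|h\|_{\nu,a}$ quantifies the discrepancy introduced by localizing the projection.

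Next I would run the parabolic fixed point: writing the equation as $\phi = \TT^0_{\lambda}\big[h - \sum(\ldots)Z_{lj}\chi_{B_1}\big]$ where $\TT^0_\lambda$ inverts $\lambda^2\partial_t - L_W$ with zero initial data (using the elliptic estimate plus a Duhamel/energy argument to absorb $\lambda^2\partial_t\phi$, exploiting that $\lambda^2/\lambda_*^{2} $ is small on the relevant time scale and $R(t)\to\infty$ slowly), and checking that the right-hand side map is a contraction in $\{\|\phi\|_{*,\nu,a,\delta}\le C\|h\|_{\nu,a}\}$. Linearity of $h\mapsto(\phi,\tilde c_{0j})$ is automatic from the construction since every step is linear. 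The main obstacle I anticipate is the derivation of the precise weighted estimate with the $\max$-structure in \eqref{norm-phi1}, in particular correctly tracking how the compactly supported projection terms $Z_{lj}\chi_{B_1}$ (rather than $w_\rho^2 Z_{lj}$) generate the $R^{\delta(5-a)}(1+|y|)^{-3}$ contribution and simultaneously controlling the perturbation $|c_{0j}-\tilde c_{0j}|$; this requires a careful bookkeeping of the modes $Z_{0j}$ (which decay like $\rho^{-2}$ and are borderline with respect to integrability of $\int w_\rho^2|Z_{0j}|^2$) and is where the restriction $a\in(2,3)$ and the exponents $\delta(5-a)$, $\delta(a-2)/2$ are forced. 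The remaining ingredients — mode decoupling, variation of parameters, and the smallness of $\lambda^2\partial_t$ — are essentially routine adaptations of \cite{ddw}, Section~2 and the inner linear theory there.
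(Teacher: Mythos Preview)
The paper does not actually give a proof of this proposition: immediately after stating Propositions~\ref{prop1.0}--\ref{prop03} it simply writes ``Propositions~\ref{prop1.0}--\ref{prop03} are proved in \cite{ddw}, section~6,'' and moves on. So there is no in-paper argument to compare your plan against; the result is imported wholesale from the two-dimensional construction.

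Your outline is broadly in the right spirit and is consistent with the kind of argument carried out in \cite{ddw}: Fourier decomposition in $\theta$ so that $L_W$ decouples into ODEs in $\rho$, variation-of-parameters estimates mode by mode to get the decay $\lambda_*^\nu(1+|y|)^{-(a-2)}$, and the identification of $\tilde c_{0j}$ as the modification needed in mode~$0$ to get improved decay on the bounded region $D_{2R}$ (this is indeed where the $R^{\delta(5-a)}(1+|y|)^{-3}$ piece and the $R^{-\delta(a-2)/2}$ discrepancy bound come from). One point worth flagging: in \cite{ddw} the parabolic problem is not handled by treating $\lambda^2\partial_t\phi$ as a small perturbation of an elliptic inverse in the way you describe; rather, one works directly with the linear parabolic flow $\lambda^2\partial_t\phi = L_W[\phi] + g$ (after a change to the self-similar time variable $\tau$ with $d\tau/dt = \lambda^{-2}$), proving weighted energy and pointwise estimates for its solution operator. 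Your ``elliptic first, then absorb $\lambda^2\partial_t$'' plan would run into trouble because $\lambda^2\partial_t\phi$ is not uniformly small relative to $L_W[\phi]$ in the weighted norm --- it is the same order. So if you want to reconstruct the proof rather than cite it, you should follow the genuinely parabolic route of \cite{ddw}, section~6.
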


\medskip

The function $\phi_2$ solves the equation with right hand side  $h_2[p,\xi,\Psi^*]$, which is in {\em mode 1}, a notion that we define next.
Let $h(y,t)\in \R^3$, be defined in $\R^2 \times (0,T)$ or $D_{2R}$ with $h\cdot W = 0$. We say that $h$ is a mode $k\in \Z$ if $h$ has the form
\[
h(y,t)= \Re ( \tilde h_k(|y|,t) e^{ik\theta}) E_1 + \Re ( \tilde h_k(|y|,t) e^{ik\theta}) E_2 ,
\]
for some complex valued function $\tilde h_k(\rho,t)$.
Consider
\begin{align}
\label{1.11-mode1}
\left\{
\begin{aligned}
\lambda^2 \partial_t \phi
& =  L_ W  [\phi ] +   h
-  \sum_{  j=1,2} c_{1j}[h] w_\rho^2 Z_{1j}
\quad \text{in } D_{2R}
\\
\phi\cdot  W  & =  0   \quad \text{in } D_{2R}
\\
\phi(\cdot, 0) & = 0 \quad \text{in } B_{2R(0)}
\end{aligned}
\right.
\end{align}

\begin{prop}
Let $a \in (2,3)$, $\delta \in (0,1)$, $\nu>0$.
Assume that $h$ is in mode 1 and $\| h \|_{\nu,a}<\infty$. Then there is a solution $\phi  =\TT_{\lambda,2} [h]$  of \eqref{1.11-mode1}, which is  linear in $h$, such that
\begin{align*}
\| \phi \|_{\nu,a-2}
\leq C \|h\|_{\nu,a} .
\end{align*}
\end{prop}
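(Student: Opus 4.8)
The plan is to carry out the inner linear theory of \cite{ddw} restricted to the mode-$1$ sector. Since $h$ is in mode $1$ and the operator $L_W$ of \eqref{L} together with the correction terms $c_{1j}[h]\,w_\rho^2 Z_{1j}$ all preserve this angular structure, we may look for $\phi$ in mode $1$ as well; writing $\phi(y,t)$ by means of a single complex radial profile $\tilde\phi_1(\rho,t)$ multiplying $e^{i\theta}$ in the frame $\{E_1,E_2\}$, problem \eqref{1.11-mode1} reduces to a complex, time-dependent, one-dimensional parabolic problem for $\tilde\phi_1(\rho,t)$ with the slow factor $\lambda^2\partial_t$ in front. In this sector the bounded kernel of $L_W$ is exactly $\mathrm{span}\{Z_{11},Z_{12}\}$, the translation generators from \eqref{ZZ} (the non-decaying elements $Z_{-1,j}$ belong to the mode $-1$ sector and are automatically orthogonal to any mode-$1$ function), and these decay like $w_\rho(\rho)\sim|y|^{-2}$. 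The subtraction of $\sum_{j=1,2}c_{1j}[h]\,w_\rho^2Z_{1j}$, with $c_{1j}$ as in \eqref{defCij}, is designed precisely so that the modified right-hand side is $L^2(w_\rho^2\,dy)$-orthogonal to $Z_{11},Z_{12}$; this is the solvability condition of the type \eqref{ww1} that produces a solution decaying faster than the kernel.

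The heart of the matter is the a priori estimate $\|\phi\|_{\nu,a-2}\le C\|h\|_{\nu,a}$ in the norm \eqref{norm-h}, which I would prove by a blow-up/compactness argument supplemented with explicit barriers. Suppose it fails: there are $h_n$ with $\|h_n\|_{\nu,a}\to0$ and mode-$1$ solutions $\phi_n$ of \eqref{1.11-mode1} with $\|\phi_n\|_{\nu,a-2}=1$. After extracting a subsequence, either the points realizing this norm stay in a bounded region, or they escape to infinity. In the first case, rescaling time appropriately and passing to the limit yields a nonzero bounded mode-$1$ solution $\phi_\infty$ of $L_W[\phi_\infty]=0$ on $\R^2$ (or of the associated ancient linearized flow) with $\phi_\infty\cdot W=0$ and $\phi_\infty$ orthogonal to $Z_{11},Z_{12}$; since the bounded mode-$1$ kernel of $L_W$ is $\mathrm{span}\{Z_{11},Z_{12}\}$, this forces $\phi_\infty\equiv0$, a contradiction. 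In the second case one uses that for $|y|\to\infty$ the operator $L_W$ agrees with $\Delta_y$ up to a potential decaying like $|y|^{-4}$, so that $\phi_n$ behaves at leading order like the particular solution of $\Delta_y\phi=O(|y|^{-a})$ in the relevant angular sector, whose decay is exactly $(1+|y|)^{-(a-2)}$; here $a\in(2,3)$ keeps the exponent $a-2$ in $(0,1)$ and away from the indicial exponents of the homogeneous solutions, and the orthogonality to $Z_{11},Z_{12}$ removes the growing homogeneous component. The parabolic term $\lambda^2\partial_t\phi$ is lower order in this range and, together with the zero initial datum, contributes nothing at leading order; all of this is made rigorous by comparison with radial super- and subsolutions of the shape $\lambda_*^\nu(1+|y|)^{-(a-2)}$, smoothly truncated near $|y|=2R(t)$, which also absorb the error in replacing $L_W$ by $\Delta_y$.

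With the a priori estimate in hand, existence is routine: one solves the linear parabolic problem \eqref{1.11-mode1} on spacetime-bounded subdomains of $D_{2R}$ (truncating $t<t_0<T$, so that the spatial domains $\{|y|<2R(t)\}$ are uniformly bounded) by standard parabolic theory, applies the estimate with a constant independent of $t_0$, and passes to the limit along a subsequence as $t_0\uparrow T$; the limiting $\phi$ solves \eqref{1.11-mode1} and satisfies $\|\phi\|_{\nu,a-2}\le C\|h\|_{\nu,a}$. Uniqueness within the class of mode-$1$ functions with finite $\|\cdot\|_{\nu,a-2}$ norm follows by applying the a priori estimate to the difference of two solutions (which solves the homogeneous problem since the $c_{1j}$ terms cancel), and since the equation, the projections $c_{1j}$, and the estimate are all linear, the assignment $\phi=\TT_{\lambda,2}[h]$ is well defined and linear in $h$.

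I expect the a priori estimate to be the main obstacle. The delicate point is to combine the purely elliptic nondegeneracy of $L_W$ in the mode-$1$ sector modulo its two-dimensional decaying kernel $\{Z_{11},Z_{12}\}$ with the parabolic scaling $\lambda^2\partial_t$ and the time-varying truncation domain $D_{2R(t)}$, while keeping the spatial decay exactly sharp---the gain of two powers, from $(1+|y|)^{-a}$ to $(1+|y|)^{-(a-2)}$, is precisely why the range $a\in(2,3)$ is imposed. The remaining steps run parallel to the corresponding mode-$1$ linear theory in \cite{ddw} and amount to bookkeeping.
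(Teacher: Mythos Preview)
The paper does not prove this proposition here: it simply states ``Propositions~\ref{prop1.0}--\ref{prop03} are proved in \cite{ddw}, section~6.'' So there is no in-paper proof to compare against; your sketch is in fact more detailed than anything the present paper provides. Your outline---reduce to a scalar complex radial equation in mode~$1$, impose the two orthogonality conditions against $Z_{11},Z_{12}$, prove the a~priori estimate by a blow-up/compactness argument in the core combined with barriers modelled on $\lambda_*^\nu(1+|y|)^{-(a-2)}$ at infinity, then obtain existence by approximation---is indeed the scheme used in \cite{ddw}, section~6, so the approach is the right one.

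Two points deserve care. First, problem \eqref{1.11-mode1} as written carries no lateral boundary condition on $\partial B_{2R(t)}$; the operator $\TT_{\lambda,2}$ in \cite{ddw} is built by a specific construction (extending $h$ and solving an auxiliary problem, not by imposing Dirichlet data on the moving boundary), so your ``solve on bounded subdomains by standard parabolic theory'' step needs to specify which boundary condition you use and why the resulting limit is independent of that choice and linear in $h$. Second, your uniqueness claim is not needed and is not asserted in the proposition: what is required is the existence of \emph{some} linear assignment $h\mapsto\phi$ with the stated bound, and in \cite{ddw} this linearity comes from the construction itself rather than from a uniqueness statement in the class $\|\cdot\|_{\nu,a-2}<\infty$. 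With these adjustments your plan matches the cited argument.
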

In the above statemen the norm $\|\phi\|_{\nu,a-2}$ analogous to the one in \eqref{norm-h}, but the supremum is taken in $D_{2R}$.

Another piece of the inner solution, $\phi_3$, will handle  $h_3[p,\xi,\Psi^*]$, which does not satisfy  orthogonality conditions in mode 0.
We will still project it to satisfy the orthogonality condition in mode 1.
Let us consider then \eqref{1.11-mode1} without any orthogonality conditions on $h$ in mode 0.
We define
\begin{align}
\label{norm-starstar}
\|\phi\|_{**,\nu}
= \sup_{D_{2R}} \
\frac{ |\phi(y,t)| + (1+|y|)\left |\nn_y \phi(y,t)\right | }
{ \la_*(t)^{\nu}  R(t)^{2} ( 1+|y| )^{-1}  } .
\end{align}

\begin{prop}
\label{prop02}
Let  $1<a<3$ and $\nu>0$. There exists a $C>0$ such that if  $\|h\|_{a,\nu} <+\infty$ there is a solution $ \phi = \TT_{\lambda,3} [h]$ of \eqref{1.11-mode1}, which is linear in $h$ and satisfies the estimate
$$
\|\phi\|_{**,\nu} \ \le\ C \|h\|_{a,\nu} .
$$
\end{prop}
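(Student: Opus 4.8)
The plan is to reduce \eqref{1.11-mode1} to an autonomous parabolic equation, to prove the a priori bound $\|\phi\|_{**,\nu}\lesssim\|h\|_{a,\nu}$ by an angular Fourier decomposition that isolates the role of the kernel of $L_W$, and then to produce $\phi=\TT_{\lambda,3}[h]$ by an approximation argument. This runs in parallel with the inner linear theory for the two-dimensional flow in \cite{ddw}, which I would follow for the technical details.

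First I would record the elementary facts about the right-hand side $g:=h-\sum_{j=1,2}c_{1j}[h]\,w_\rho^2 Z_{1j}$ of \eqref{1.11-mode1}. Since $|w_\rho(\rho)|+|Z_{1j}(y)|\lesssim(1+\rho)^{-2}$ and $1<a<3$, the integrals in \eqref{defCij} converge, $|c_{1j}[h](t)|\lesssim\lambda_*(t)^\nu\|h\|_{a,\nu}$, and $w_\rho^2 Z_{1j}$ decays like $(1+\rho)^{-6}$; hence $\|g\|_{a,\nu}\lesssim\|h\|_{a,\nu}$ and, by the definition of the $c_{1j}$, $g(\cdot,t)$ is $L^2(\R^2)$-orthogonal to $Z_{11},Z_{12}$ for every $t$. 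Next I would pass to the time variable $\tau$ with $\frac{d\tau}{dt}=\lambda(t)^{-2}$, so that $\lambda^2\phi_t=L_W[\phi]+g$ becomes the autonomous problem $\partial_\tau\phi=L_W[\phi]+g$ on $B_{2R(\tau)}\times(\tau_0,\infty)$ with $\phi(\cdot,\tau_0)=0$.

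The heart of the matter is the a priori estimate. I would expand $\phi$ and $g$ in the orthonormal frame $E_1(y),E_2(y)$ and in Fourier series in $\theta$, reducing the problem mode by mode to scalar radial parabolic equations with potential $2w_\rho^2-(1+k^2+2k\cos w)\,\rho^{-2}$ (a direct computation in polar coordinates, cf.\ \cite{ddw}). For $|k|\ge2$ the potential admits no bounded homogeneous solution, so weighted energy estimates together with explicit supersolutions give a bound far better than required. For $k=1$ the two homogeneous solutions are $w_\rho\sim\rho^{-2}$ (the kernel element $Z_{1j}$, to which $g$ has been made orthogonal) and the growing one $\sim\rho^2$; after the projection the mode-$1$ part of $\phi$ is controlled with the sharper decay of the norm $\|\cdot\|_{\nu,a-2}$, which on $B_{2R}$ is still within the claimed bound since $a>1$. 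The loss occurs precisely in the modes where $g$ keeps a nonzero projection onto a kernel element of $L_W$ that is not projected out (the mode-$0$ elements $Z_{0j}\sim\rho^{-1}$ and the mode-$(-1)$ elements $Z_{-1,j}\sim1$): there the elliptic problem on $B_{2R}$ with zero Dirichlet data is still uniquely solvable, but with solution operator of norm $O(R^2)$, because the first Dirichlet eigenvalue of $L_W$ on $B_{2R}$ in those modes is $\sim R^{-2}$ — as one sees by using the truncation of the kernel element as a test function. Together with the vanishing of $\phi$ at $\tau_0$, which prevents the evolution from building up additional growing-mode amplitude, this reproduces exactly the weight $\lambda_*^\nu R^2(1+|y|)^{-1}$ appearing in $\|\cdot\|_{**,\nu}$.

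Given the a priori estimate, existence and linearity of $\phi=\TT_{\lambda,3}[h]$ follow in the standard way: solve the linear problem on a fixed large ball (by the linear semigroup or a Galerkin scheme), use the uniform bound to pass to the limit, and check linearity in $h$; the term $\partial_\tau\phi$ and the slow growth $R=R(\tau)=\lambda_*^{-\beta}$ are absorbed because $R$ varies at the parabolic rate, so that $\dot R\,R^{-1}$ and $\lambda\dot\lambda$ are of lower order. I expect the main obstacle to be the sharp accounting in the modes where $g$ is not orthogonal to the kernel of $L_W$: one must verify that the interplay of the non-orthogonal source, the Dirichlet truncation at radius $\sim R$, the $E_1$--$E_2$ coupling through $\cos w$, and the zero initial datum produces no worse than the factor $R^2$ built into $\|\cdot\|_{**,\nu}$. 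All this is carried out in \cite{ddw}, of which the present statement is the direct analogue.
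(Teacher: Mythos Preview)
Your proposal is correct and follows the same approach as the paper, which simply defers to \cite{ddw}, section~6: the time change $d\tau=\lambda^{-2}\,dt$, the angular Fourier decomposition in the frame $E_1,E_2$, the mode-by-mode scalar analysis, and the identification of the $R^2$ loss with the unprojected mode-$0$ kernel $Z_{0j}\sim\rho^{-1}$ are exactly the ingredients used there. One small sharpening: the mode $k=-1$ by itself contributes only a $\log R$ factor (this is precisely the content of Proposition~\ref{prop03}), so the $R^2(1+|y|)^{-1}$ weight in $\|\cdot\|_{**,\nu}$ is saturated by mode~$0$ alone, with mode~$-1$ absorbed into it.
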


Note that we allow $a$ to be less than 2 in the previous proposition.

Next we have a variant of Proposition~\ref{prop02} when  $h$ is in mode -1.

\begin{prop}
\label{prop03}
Let  $2<a<3$ and $\nu>0$. There exists a $C>0$ such that for any $h$ in mode -1   with  $\|h\|_{a,\nu} <+\infty$, there is a solution $\phi = \TT_{\lambda,4} [h]$ of problem \eqref{1.11-mode1}, which is linear in $h$ and satisfies the estimate
$$
\|\phi\|_{***,\nu}  \leq C \|h\|_{a,\nu}  ,
$$
where
\begin{align}
\nonumber
\|\phi\|_{***,\nu}
= \sup_{D_{2R}} \
\frac{ |\phi(y,t)| + (1+|y|)\left |\nn_y \phi(y,t)\right | }
{ \la_*(t)^{\nu}  \log(R(t))  } .
\end{align}
\end{prop}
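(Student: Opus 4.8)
The plan is to obtain this statement, the mode $-1$ counterpart of Proposition~\ref{prop02}, by the same scheme as in \cite{ddw}, exploiting the structure of $L_W$ in Fourier mode $-1$. Since $h$ is in mode $-1$ and $L_W$ preserves modes, the mode-$1$ projections $c_{1j}[h]$ in \eqref{1.11-mode1} vanish, so the problem reduces to finding a mode-$(-1)$ solution of $\lambda^2\partial_t\phi = L_W[\phi] + h$ in $D_{2R}$, with $\phi\cdot W = 0$ and $\phi(\cdot,0)=0$. Expressing the mode-$(-1)$ part of $\phi$ through a complex radial profile, the relevant scalar ODE is annihilated, by \eqref{ZZ}, by the \emph{bounded} profile $g_1(\rho) = \rho^2 w_\rho(\rho) = -2\rho^2/(1+\rho^2)$ (note $g_1(0)=0$, $g_1(\infty)=-2$). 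Reduction of order gives a second solution $g_2 = g_1\int^\rho \frac{ds}{s\,g_1(s)^2}$ with Wronskian $g_1 g_2' - g_1' g_2 = \rho^{-1}$; one checks that $g_2$ is singular at the origin, of order $\rho^{-2}$, and grows only logarithmically at infinity, $g_2(\rho)\sim -\tfrac12\log\rho$.

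First I would construct the elliptic inverse treating $t$ as a parameter. For a mode-$(-1)$ datum with $\|h\|_{a,\nu}<\infty$, $2<a<3$, variation of parameters gives the solution regular at the origin
\[
\tilde\phi_0(\rho,t) \;=\; g_2(\rho)\int_0^\rho s\,g_1(s)\,\tilde h(s,t)\,ds \;+\; g_1(\rho)\int_\rho^{\infty} s\,g_2(s)\,\tilde h(s,t)\,ds .
\]
Since $a>2$, the outer integral converges and $g_1(\rho)\int_\rho^\infty s\,g_2(s)\tilde h(s,t)\,ds = O(\lambda_*^\nu\rho^{2-a}\log\rho)$, which decays, while $\int_0^\rho s\,g_1(s)\tilde h(s,t)\,ds$ converges, as $\rho\to\infty$, to a constant $c_*(t) = O(\lambda_*^\nu)$, so the first term is $O(\lambda_*^\nu\log\rho)$. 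This logarithm cannot be removed: the bounded homogeneous solution $g_1$ is not logarithmic and $g_2$ is inadmissible at $\rho=0$, so no solvability condition is imposed in mode $-1$ — the would-be obstruction is simply absorbed into the logarithmic growth of $\phi$, consistent with \eqref{1.11-mode1} carrying no mode-$(-1)$ projection. Hence $|\tilde\phi_0| + (1+\rho)|\partial_\rho\tilde\phi_0|\le C\lambda_*^\nu\log(2+\rho)$, which on $D_{2R}$ is $\le C\lambda_*^\nu\log R(t)$: exactly the $\|\cdot\|_{***,\nu}$ control, with no spatial decay gained.

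Next I would pass to the parabolic problem by writing $\phi = \phi_0 + \phi_1$, with $\phi_0$ the mode-$(-1)$ function of profile $\tilde\phi_0$. The correction solves $\lambda^2\partial_t\phi_1 = L_W[\phi_1] - \lambda^2\partial_t\phi_0$ with initial datum $-\phi_0(\cdot,0)$; the forcing $\lambda^2\partial_t\phi_0$ is $O(\lambda^2)$ times a function with the spatial decay of $h$ (the time dependence entering through $\dot p$, $\dot\xi$, $\dot\lambda_*$), hence of strictly lower order, and $\phi_0(\cdot,0)$ is a small decaying initial value. To close this I would invoke the a priori estimate for the homogeneous inner problem in mode $-1$: if $\lambda^2\partial_t\phi = L_W[\phi] + g$ in $D_{2R}$, $\phi\cdot W=0$, $\phi(\cdot,0)=0$, $g$ in mode $-1$ with finite norm, then $\|\phi\|_{***,\nu}\le C\|g\|_{a,\nu}$ (plus boundary contributions). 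As in \cite{ddw} this follows from an energy/testing argument exploiting the coercivity of $-L_W$ in mode $-1$ — no mode-$(-1)$ orthogonality being needed precisely because $Z_{-1,j}$ is merely bounded rather than lying in the relevant weighted $L^2$ — upgraded to the stated pointwise bound, including the term $(1+|y|)|\nabla_y\phi|$, by rescaled interior parabolic estimates. Existence then follows by solving on an exhaustion of time intervals and passing to the limit with the a priori bound, and a contraction in $\|\cdot\|_{***,\nu}$ produces $\phi_1$; adding back $\phi_0$ gives $\phi = \TT_{\lambda,4}[h]$ with the asserted estimate.

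The hard part is twofold. First, pinning down the \emph{logarithmic} rather than power-type growth of $\tilde\phi_0$: this relies on selecting exactly the particular solution above and on the convergence of $\int_0^\infty s\,g_1(s)\tilde h(s,t)\,ds$ (which is where $a>2$ is used), so that the estimate does not degrade to an $R^{3-a}$ bound. Second, proving the mode-$(-1)$ a priori parabolic estimate itself with the borderline weight $\log R(t)$ and the moving boundary $|y| = 2R(t)$: because $Z_{-1,j}$ does not decay, the boundary terms in the energy argument require care, and one must confirm that the absence of a decaying correction is compatible with a clean $\|\cdot\|_{***,\nu}$ bound.
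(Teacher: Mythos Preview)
The paper does not prove this proposition; it simply records that Propositions~\ref{prop1.0}--\ref{prop03} are established in \cite{ddw}, Section~6. Your sketch is therefore to be judged on its own terms.

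Your elliptic analysis is correct and is the heart of the matter: the mode-$(-1)$ radial kernel $g_1(\rho)=\rho^2 w_\rho(\rho)$ is bounded rather than decaying, the independent solution $g_2$ grows like $\log\rho$, and with $2<a<3$ the variation-of-parameters solution $\tilde\phi_0$ is $O(\lambda_*^\nu\log\rho)$ with no removable obstruction. This is exactly why the $\|\cdot\|_{***,\nu}$ weight carries the factor $\log R(t)$ and why no mode-$(-1)$ projection is imposed.

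The gap is in the parabolic step. You split $\phi=\phi_0+\phi_1$ and assert that the forcing $\lambda^2\partial_t\phi_0$ is ``$O(\lambda^2)$ times a function with the spatial decay of $h$''. This is not correct for a general $h$ with only $\|h\|_{a,\nu}<\infty$: differentiating the variation-of-parameters formula in $t$ gives $\partial_t\tilde\phi_0$ equal to the elliptic inverse applied to $\partial_t\tilde h$, which (i) is not controlled by $\|h\|_{a,\nu}$, and (ii) even when it is, inherits the \emph{logarithmic growth} of $\phi_0$, not the $(1+\rho)^{-a}$ decay of $h$. So the a~priori estimate you want to invoke for $\phi_1$ (with right-hand side in $\|\cdot\|_{a,\nu}$) does not apply, and the contraction does not close. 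Your parenthetical ``the time dependence entering through $\dot p,\dot\xi,\dot\lambda_*$'' tacitly assumes the specific $h$ of \eqref{eqphi4}, not the general $h$ of the proposition.

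The fix, and what \cite{ddw} actually does, is to prove the parabolic bound directly rather than via this splitting: change time variable $d\tau=\lambda^{-2}\,dt$ to obtain $\partial_\tau\phi=L_W[\phi]+h$ on a domain with slowly growing radius, and then run a barrier (maximum-principle) argument with a supersolution modeled on the elliptic profile --- essentially $\lambda_*^\nu\log(2+\rho)$ plus lower-order corrections. The elliptic computation you carried out is precisely what tells you the right barrier shape; it is used as a guide for the supersolution, not as a first term in an additive splitting. The gradient bound $(1+|y|)|\nabla_y\phi|$ then follows from rescaled interior parabolic regularity, as you say.
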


Propositions~\ref{prop1.0}--\ref{prop03} are proved in \cite{ddw}, section~6.

\subsection{The equations for \texorpdfstring{$p = \lambda e^{i\omega}$}{}}
We need to choose the free parameters $p$, $\xi$ so that  $c_{lj}[h(p,\xi, \Psi^*)]=0$ for $l=-1,0,1$, $j=1,2$. This will be easy to do for $l=1$ (mode 1), but mode $l=0$ is more complicated.


To handle $c_{0j}$ we note that by definitions \eqref{HH2}, \eqref{defB0j}, \eqref{defA0}
\begin{align*}
c_{0,j}[h(p,\xi,\Psi^*)]  = \frac{2\pi \lambda }{\int_{\R^2} w_\rho^2 |Z_{0j}|^2}\left(
\mathcal B_{0j}[p] - a_{0j}[p,\xi,\Psi^*]
\right)
\end{align*}
where $B_0$, $a_0$ are defined in \eqref{defB0-new}, \eqref{defA0} and we recall that $p = \lambda e^{i \omega}$.
\noanot{ 
\cb
By the definition of $h[ p,\xi,\Psi^*]$ \eqref{HH2}
\[
h[p,\xi, \Psi^*] = \la^2  Q_{-\omega}\ttt L_U  [\Psi^* ] \chi_{D_{2R} }   + \la^2  Q_{-\omega} [ \KK_{0}[p,\xi]+ \KK_{1}[p,\xi]] ,
\]
and then
\begin{align*}
c_{0j} [  h[p,\xi, \Psi^*]   ]
&=  \frac 1 { \int_{\R^2} w_\rho^2 |Z_{lj}|^2  } \int_{\R^2} h[p,\xi, \Psi^*]   \cdot Z_{0j}(y)\, dy
\\
&=
\frac{\lambda^2}{ \int_{\R^2} w_\rho^2 |Z_{lj}|^2  }
\int_{B_{2R}}    Q_{-\omega}\ttt L_U  [\Psi^* ] \cdot Z_{0j}(y)\, dy
\\
& \quad +
\frac{\lambda^2}{ \int_{\R^2} w_\rho^2 |Z_{lj}|^2  }
\int_{\R^2}Q_{-\omega} [ \KK_{0}[p,\xi]+ \KK_{1}[p,\xi]] \cdot Z_{0j}(y)\, dy
\\
&=\frac{2\pi \lambda }{\int_{\R^2} w_\rho^2 |Z_{0j}|^2}\left(
\mathcal B_{0j}[p] - a_{0j}[p,\xi,\Psi^*] \right).
\end{align*}
} 

So  to achieve $c_{0j}[h(p,\xi, \Psi^*)]=0$ we should solve
\begin{align}
\label{eqAbc}
\mathcal B_0[p ](t)  = a_0[p,\xi,\Psi^* ](t), \quad t\in [0,T],
\end{align}
adjusting   the parameters $\lambda(t)$ and $\omega(t)$.
%
We define the following norms.
Let $I$ denote either the interval $[0,T]$ or $[-T,T]$.
For  $\Theta\in (0,1)$, $l\in \R$ and a continuous function $g:I\to \C$ we let
\begin{align}
\nonumber
\|g\|_{\Theta,l} = \sup_{t\in I} \, (T-t)^{-\Theta} |\log(T-t)|^{l} |g(t)| ,
\end{align}
and for  $\gamma \in (0,1)$, $m \in (0,\infty) $, and $l \in \R$ we let
\begin{align}
\nonumber
[ g]_{\gamma,m,l} = \sup \, (T-t)^{-m}  |\log(T-t)|^{l} \frac{|g(t)-g(s)|}{(t-s)^\gamma} ,
\end{align}
where the supremum is taken over $s \leq t$ in $ I$  such that $t-s \leq \frac{1}{10}(T-t)$.

We have then the following result, whose proof is in
\cite{ddw}, section 13.

\begin{prop}
\label{propIntegralOp}
Let $\alpha ,  \gamma \in (0,\frac{1}{2})$, $l\in \R$, $C_1>1$.
There is $\alpha_0>0$ such that if $\Theta \in (0,\alpha_0)$ and
$m \leq \Theta - \gamma$,
then for  $a:[0,T]\to \C$ is such that
\begin{align}
\label{hypA00}
\left\{
\begin{aligned}
& \frac{1}{C_1} \leq | a(T) | \leq C_1 ,
\\
& T^\Theta |\log T|^{1+\sigma-l} \| a(\cdot) - a(T) \|_{\Theta,l-1}
+ [a]_{\gamma,m,l-1}
\leq C_1 ,
\end{aligned}
\right.
\end{align}
for some $\sigma>0$,
then, for $T>0$ small enough there are two operators $\mathcal P $ and $\Rem$ so that $p = \mathcal P[a]: [-T,T]\to \C$ satisfies
\begin{align}
\label{eq-modified0}
\mathcal B_0[p](t)
= a(t) + \Rem[a](t) , \quad t \in [0,T],
\end{align}
with
\begin{align}
\nonumber
& |\Rem[a](t) |
\\
\nonumber
& \leq  C
\Bigl( T^{\sigma}
+ T^\Theta  \frac{\log |\log T|}{|\log T|}  \| a(\cdot) - a(T) \|_{\Theta,l-1}
+ [a]_{\gamma,m,l-1} \Bigr)
\frac{(T-t)^{m+(1+\alpha ) \gamma}}{  |\log(T-t)|^{l}} ,
\end{align}
for some $\sigma>0$.
\end{prop}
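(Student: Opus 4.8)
\medskip

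The plan is to build $\mathcal P$ by explicitly inverting the \emph{model operator}
\[
\LLL_0[p](t)\ :=\ \int_{-T}^t \frac{\dot p(s)}{T-s}\,ds\ -\ \dot p(t)\,\log(T-t),
\]
which, by the formal computation leading to \equ{formal}, agrees with $\mathcal B_0[p]$ up to terms that are of lower order once a priori bounds on $\dot p$ are in hand. Then $\Rem[a]$ is, by construction, the discrepancy $\mathcal B_0[\mathcal P[a]]-\LLL_0[\mathcal P[a]]$, so the statement reduces to: (i) $\LLL_0$ can be inverted with good control in the relevant weighted norms, and (ii) this discrepancy has the claimed extra $(T-t)$--decay and smallness.

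\emph{Inverting $\LLL_0$.} With $g(t)=\int_{-T}^t\frac{\dot p(s)}{T-s}\,ds$, so that $\dot p(t)=(T-t)g'(t)$, the equation $\LLL_0[p]=a$ becomes the first order linear ODE
\[
g'(t)-\frac{g(t)}{(T-t)\log(T-t)}\ =\ -\frac{a(t)}{(T-t)\log(T-t)},
\]
whose integrating factor is $|\log(T-t)|$, since $\frac{d}{dt}\log|\log(T-t)|=\frac{1}{(T-t)\,|\log(T-t)|}$. Integrating yields the closed formula
\[
\dot p(t)\ =\ \frac{a(t)}{|\log(T-t)|}\ -\ \frac{1}{\log^2(T-t)}\Bigl(\int_0^t\frac{a(s)}{T-s}\,ds+C_0\Bigr),\qquad t\in[-T,T),
\]
after extending $a$ to $[-T,0]$ by $a(0)$; one normalizes by $p(T)=0$ and chooses the free constant $C_0$ so that the leading coefficient $\kappa$ of $\dot p$ (which equals $-C_0$ up to a bounded error) has $|\kappa|\sim|\log T|$. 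This defines $p=\mathcal P[a]$. Splitting $\int_0^t\frac{a(s)}{T-s}\,ds=a(T)|\log(T-t)|+O(|\log T|)+\int_0^t\frac{a(s)-a(T)}{T-s}\,ds$ and estimating the last integral by the first inequality in \equ{hypA00}, one finds
\[
\dot p(t)\ =\ \frac{a(t)-a(T)}{|\log(T-t)|}\ -\ \frac{\kappa+O(1)}{\log^2(T-t)},
\]
whence $|\dot p(t)|\sim|\dot\lambda_*(t)|$ on $(0,T)$, while differentiating the closed formula once more and using $[a]_{\gamma,m,l-1}$ controls the $\gamma$--Hölder seminorm of $\dot p$. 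Thus $p=\mathcal P[a]$ sits in the a priori class used throughout: $\lambda=|p|\sim\lambda_*$ and $\omega=\arg p$ are well defined for $t<T$, with $p(t)\sim\kappa\,(T-t)/\log^2(T-t)$.

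\emph{Estimating the remainder.} I would write $\Rem[a]=(\mathcal B_0-\LLL_0)[\mathcal P[a]]$ and split it according to the structure of $\mathcal B_{01},\mathcal B_{02}$: (I) replacing $\Gamma_1,\Gamma_2$ by $1$, controlled by $|\Gamma_l(\tau)-1|\le C\tau(1+|\log\tau|)$ for $\tau<1$ and $|\Gamma_l(\tau)|\le C\tau^{-1}$ for $\tau>1$, which confines the correction to $t-\tau\sim\lambda^2$ and carries a positive power of $\lambda\lesssim T|\log T|^{-1}$, hence of $T$; (II) the range $\tau\in(t-\lambda^2,t)$ together with the $-2\dot\lambda$ term, which combine into an $O(\|\dot p\|_\infty)$ quantity as recorded in Section~5; (III) the effect of the $e^{\pm i\omega(t)}$ factors and of $\Gamma_1\neq\Gamma_2$, which differs from the diagonal term by an amount governed by the slow variation of $\omega$, hence by the $\gamma$--Hölder seminorm of $\dot p$; and (IV) the passage, performed in \equ{formal}, from $\int_{-T}^{t-\lambda^2}\frac{\dot p(\tau)}{t-\tau}\,d\tau$ to $\int_{-T}^{t}\frac{\dot p(\tau)}{T-\tau}\,d\tau-\dot p(t)\log(T-t)$, where the logarithmically divergent pieces cancel by the very choice of $\LLL_0$, leaving the error $\int_{t-(T-t)}^{t-\lambda^2}\frac{\dot p(\tau)-\dot p(t)}{t-\tau}\,d\tau+\int_{-T}^{t-(T-t)}\dot p(\tau)\bigl(\tfrac1{t-\tau}-\tfrac1{T-\tau}\bigr)\,d\tau$, which is bounded by $C\bigl([\dot p]_{\gamma,\cdot,\cdot}+\|\dot p\|_\infty\bigr)$ times a weight of the form $(T-t)^{m+(1+\alpha)\gamma}|\log(T-t)|^{-l}$ once the Hölder bound for $\dot p$ is interpolated across scales (the $(1+\alpha)$ is precisely this interpolation loss). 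Combining (I)--(IV), re-expressing the seminorms of $\dot p$ through those of $a$ via the previous step, and tracking the powers of $|\log(T-t)|$ carefully gives the asserted bound on $|\Rem[a](t)|$, the three summands $T^{\sigma}$, $T^{\Theta}\frac{\log|\log T|}{|\log T|}\|a(\cdot)-a(T)\|_{\Theta,l-1}$ and $[a]_{\gamma,m,l-1}$ coming respectively from the small-scale corrections (I)--(II), from the $a(t)-a(T)$ part of $\dot p$, and from its Hölder part, the latter two feeding into (III)--(IV).

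The hard part is that $\LLL_0$ is \emph{borderline}: its homogeneous solutions are exactly the profiles $\dot p\sim(\log(T-t))^{-2}$, so inverting it loses no $(T-t)$--decay but gains only logarithmically; every estimate must therefore be carried to the exact power of $|\log(T-t)|$, and the $|\log T|$--weights in \equ{hypA00} together with the output weight $|\log(T-t)|^{-l}$ are sharp and leave essentially no slack. Extracting the strictly positive extra exponent $m+(1+\alpha)\gamma$ in the remainder — which is what eventually makes the full inner--outer system contractive — is the delicate point, and is where the $\gamma$--Hölder control of the data is indispensable. A secondary difficulty is a mild self-reference: $\mathcal B_0$ depends on $p$ not only linearly but also through $\lambda=|p|$ and $\omega=\arg p$ inside its kernel, so $\Rem[a]$ genuinely depends on $\mathcal P[a]$; this is handled by performing all estimates within the a priori class fixed above, the genuine nonlinear closure being deferred to the fixed point argument for the complete gluing system.
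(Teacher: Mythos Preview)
Your overall architecture — invert an explicit model operator and throw everything else into $\Rem$ — is the right one, and your closed-form inversion of $\LLL_0$ via the integrating factor is clean. But there is a real gap in step (IV), and it is exactly the place where the parameter $\alpha$ enters.

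You split the remainder at $t-(T-t)$ and write the Hölder piece as
\[
\int_{t-(T-t)}^{t-\lambda^2}\frac{\dot p(\tau)-\dot p(t)}{t-\tau}\,d\tau .
\]
With the weighted Hölder control $|\dot p(t)-\dot p(\tau)|\lesssim (T-t)^{m}|\log(T-t)|^{-l}(t-\tau)^{\gamma}$ this integral is only $\lesssim (T-t)^{m+\gamma}|\log(T-t)|^{-l}$, \emph{not} $(T-t)^{m+(1+\alpha)\gamma}|\log(T-t)|^{-l}$. The explanation ``the $(1+\alpha)$ is precisely this interpolation loss'' is not right: no interpolation recovers the missing factor $(T-t)^{\alpha\gamma}$ from a split at scale $T-t$. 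And the gap matters, because later one needs $m+(1+\alpha)\gamma>\Theta$ (equivalently $\alpha-1+2\beta>0$ in \S\ref{constants}); with only $m+\gamma$ this inequality can fail.

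The paper's cure is to build $\alpha$ into the model operator itself: one writes
\[
\int_{-T}^{t-\lambda_*^2}\frac{\dot p(s)}{t-s}\,ds \;=\; S_\alpha[\dot p](t)\;+\;R_\alpha[\dot p](t),
\]
with
\[
S_\alpha[g](t)=g(t)\bigl[-2\log\lambda_*(t)+(1+\alpha)\log(T-t)\bigr]+\int_{-T}^{\,t-(T-t)^{1+\alpha}}\!\frac{g(s)}{t-s}\,ds,
\qquad
R_\alpha[g](t)=-\!\int_{t-(T-t)^{1+\alpha}}^{\,t-\lambda_*^2}\!\frac{g(t)-g(s)}{t-s}\,ds,
\]
solves $S_\alpha[\dot p]=a$ (this is the operator one actually inverts, not your $\LLL_0$), and sets $\Rem$ essentially equal to $R_\alpha[\dot p]$. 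Because the Hölder integral now runs only over $|t-s|\le (T-t)^{1+\alpha}$, one gets $|R_\alpha[\dot p](t)|\lesssim (T-t)^{m+(1+\alpha)\gamma}|\log(T-t)|^{-l}$ directly. In other words, the $(1+\alpha)$ is not a loss but a \emph{choice} of splitting scale, and the price is that $S_\alpha$ (with kernel $1/(t-s)$ and an $\alpha$-dependent truncation) is a bit harder to invert than your $\LLL_0$; that inversion is what the full argument in \cite{ddw}, Section~13, carries out. Your $\LLL_0$ is its $\alpha\to 0$ caricature, which is why the extra exponent never appears in your remainder.
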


%
%


The idea of the proof og Proposition~\ref{propIntegralOp} is to notice that
\begin{align*}
\mathcal B_0[p ] \approx \int_{-T} ^{t-\la_*(t)^2}     \frac{\dot p(s)}{t-s}ds.
\end{align*}
and decompose
\begin{align}
\nonumber
S_\alpha  [ g]
& :=
g(t) [ - 2\log \lambda_*(t) + (1+\alpha  ) \log (T-t)]
+
\int_{-T} ^{t-(T-t)^{1+\alpha }}  \frac{g(s)}{t-s}ds  ,
\\
\label{defRem}
R_{\alpha }[g]
& :=
-\int_{t-(T-t)^{1+\alpha } } ^{t-\la_*^2}  \frac  {g(t) -g(s)}{t-s} ds.
\end{align}
where $\alpha>0$ is fixed.
We solve a modified equation where in \eqref{eq-modified0} we drop $R_{\alpha }[\dot p]$, and so  the remainder $\Rem$ is essentially $R_{\alpha}[\dot p]$.

Another modification to equations \eqref{eqAbc} that we introduce is to replace $a_0[p,\xi,\Psi^*]$ by its main term.
To do this we write
\begin{align*}
a_0[p,\xi,\Psi]
=a_0^{(0)}[p,\xi,\Psi] + a_0^{(1)}[p,\xi,\Psi] + a_0^{(2)}[p,\xi,\Psi]
\end{align*}
where
\begin{align}
\nonumber
a_0^{(l)}[p,\xi,\Psi]
= -\frac{\lambda}{4\pi}
e^{i\omega}
\int_{B_{2R}}
\left(
Q_{-\omega}\tilde L_U[\Psi]_l \cdot Z_{01} +
i Q_{-\omega}\tilde L_U[\Psi]_l \cdot Z_{02}
\right)\,dy
\end{align}
for $l=0,1,2$.

We define
\begin{align}
\nonumber
c_0^*[p,\xi,\Psi^*](t)
& :=
\frac{ 4 \pi \lambda}{\int_{\R^2} w_\rho^2 |Z_{01}|^2  }
e^{- i\omega}
\Bigl(
\Rem\left[  a_0^{(0)}[p,\xi,\Psi^*]  \right](t)
+a_0^{(1)}[p,\xi,\Psi^*](t)
\\
\nonumber
& \qquad
+ a_0^{(2)}[p,\xi,\Psi^*](t)
\Bigr)  - (c_0[  h[ p,\xi,\Psi^* ]]-\tilde c_0[  h_1[ p,\xi,\Psi^* ] ] )
- \tilde{\mathcal B}_0[p,\xi] ,
\end{align}
and
\begin{align*}
c_{01}^* :=  \Re(c_{0}^*) , \quad
c_{02}^* :=
\Im(c_{0}^*) ,
\end{align*}
where  $\Rem$ is the operator given Proposition~\ref{propIntegralOp} and $\tilde c_0 = \tilde c_{01} + i \tilde c_{02}$ are the operators defined in Proposition~\ref{prop1.0}.

\subsection{The system of equations}

We transform the system \eqref{inner1}-\eqref{outer1} in the problem of finding functions $\psi(r,z,t)$,  $\phi_1(y,t),\ldots,\phi_4(y,t)$,  parameters $p(t) = \la(t)e^{i\omega(t)} $, $\xi(t)$ and constants $c_1,c_2,c_3$
such that the following system is satisfied:
\begin{align}
\label{eq-psi}
\left\{
\begin{aligned}
\psi_t &=  (\partial_r^2+\partial_z^2) \psi  +  g(p,\xi, Z^*+ \psi,\phi_1+ \phi_2+\phi_3+\phi_4)
\inn \DD \times (0,T)
\\
\psi &= ({\bf e}_3 - U) -\Phi^0 \qquad\qquad\ \  \onn (\partial\DD\setminus\{r=0\})\times (0,T)
\\
\partial_r \psi &=0  \qquad\qquad\ \  \onn ( \{r=0\} \cap \DD) \times (0,T)
\\
\psi(\cdot ,0)
&=  (c_1 \,  \mathbf{e_1}  + c_2 \, \mathbf{e_2}  + c_3\,  \mathbf{e_3})\chi +
\ (1-\chi) ({\bf e}_3 - U -\Phi^0)       \inn  \DD
\\
\psi(r_0,z_0,T) &  =  - Z^* (r_0,z_0,T)
\end{aligned}
\right.
\end{align}
\begin{align}
\left\{
\begin{aligned}
\lambda^2  \partial_t \phi_1
&= L_W [\phi_1] + h_1[p,\xi, \Psi^*]
-  \sum_{  j=1,2} \tilde c_{0j}[ h_1[p,\xi, \Psi^*]  ] w_\rho^2 Z_{0j}
\\
\label{eqphi1}
& \qquad
-  \sum_{ \substack{l=-1,1\\ j=1,2}} c_{lj}[ h_1[p,\xi, \Psi^*]  ] w_\rho^2  Z_{lj}
\inn D_{2R}
\\
\phi_1\cdot  W  &=  0   \inn D_{2R}
\\
\phi_1(\cdot, 0) &=0 \inn B_{2R(0)}
\end{aligned}
\right.
\end{align}

\begin{align}
\left\{
\begin{aligned}
\label{eqphi2}
\lambda^2 \partial_t \phi_2
&= L_W [\phi _2]  + h_2[p,\xi, \Psi^*]
 -  \sum_{  j=1,2} c_{1j}[  h_2[p,\xi, \Psi^*]    ] w_\rho^2  Z_{1j}    \inn D_{2R}
\\
\phi_2\cdot  W  & = 0   \inn D_{2R}
\\
\phi_2(\cdot, 0) & =0 \inn B_{2R(0)}
\end{aligned}
\right.
\end{align}

\begin{align}
\label{eqphi3}
\left\{
\begin{aligned}
\lambda^2  \partial_t  \phi_3 &= L_W [\phi_3] +
h_3  -  \sum_{  j=1,2} c_{1j}[  h_3[p,\xi, \Psi^*]    ] w_\rho^2 Z_{1j}
\\
& \quad
+ \sum_{j=1,2} c_{0j}^*[p,\xi,\Psi^*] w_\rho^2 Z_{0j} \inn D_{2R}
\\
\phi_3\cdot  W  & = 0   \inn D_{2R}
\\
\phi_3(\cdot, 0) & =0 \inn B_{2R(0)}
\end{aligned}
\right.
\end{align}

\begin{align}
\label{eqphi4}
\left\{
\begin{aligned}
\lambda^2  \partial_t  \phi_4 &= L_W [\phi_4 ]
+  \sum_{j=1,2}  c_{-1,j}[   h_1[p,\xi, \Psi^*]   ] w_\rho^2 Z_{-1j}
\\
\phi_4\cdot  W  & = 0   \inn D_{2R}
\\
\phi_4(\cdot, t) & = 0 \onn \pp B_{2R(t)}
\\
\phi_4(\cdot, 0) & =0 \inn B_{2R(0)}
\end{aligned}
\right.
\end{align}

\begin{align}
\label{1.3}
c_{0j}[h(p,\xi, \Psi^*)](t) - \tilde c_{0j}[p,\xi,\Psi^*] (t) &= 0 \foral t\in (0,T), \quad j=1,2,
\\
\label{1.4}
c_{1j}[h(p,\xi, \Psi^*)](t)  &=  0 \foral t\in (0,T), \quad j=1,2.
\end{align}
In \eqref{eq-psi}  $\chi$ is a smooth cut-off function with compact support in $\DD$ which is identically 1 on a fixed neighborhood of $(r_0,z_0)$ independent of $T$ and the function $ g(p,\xi, \Psi^*,\phi)$ is given by \equ{GG}.

\medskip
We see that if $(\phi_1,\phi_2,\phi_3,\phi_4,\psi,p,\xi)$ satisfies system \equ{eq-psi}--\equ{1.4}  then the functions
$$
\phi= \phi_1 + \phi_2+\phi_3+\phi_4, \quad \Psi^* = \tilde Z^*+ \psi
$$
solve the outer-inner gluing system  \equ{inner1}--\equ{outer1}.



\subsection{The fixed point formulation}

We consider the inner-outer system including the equations for the parameters $p$ and $\xi$ \eqref{eq-psi}--\eqref{1.4} as a fixed point problem for certain operators that we describe below.
First we define the functional spaces we will use for the functions $\psi,\phi_1,\ldots,\phi_4,p,\xi$.

%
%

For the outer problem \eqref{eq-psi} we define,  given $\Theta>0$, $\gamma \in (0,\frac{1}{2})$  the norm
\begin{align}
\nonumber
\| \psi\|_{\sharp, \Theta,\gamma}
&:=
\lambda_*(0)^{-\Theta}
\frac{1}{|\log T|  \lambda_*(0) R(0) }\|\psi\|_{L^\infty(\Omega\times (0,T))}
+ \lambda_*(0)^{-\Theta} \|\nabla \psi\|_{L^\infty(\Omega\times (0,T))}
\\
\nonumber
&\quad
+
\sup_{\Omega\times (0,T)}   \lambda_*(t)^{-\Theta-1} R(t)^{-1}
\frac{1}{|\log(T-t)|} |\psi(x,t)-\psi(x,T)|
\\
\nonumber
&\quad
+ \sup_{\Omega\times (0,T)} \, \lambda_*(t)^{-\Theta}
|\nabla \psi(x,t)-\nabla \psi(x,T) |
\\
\label{normPsi}
& \quad
+ \sup_{}
\lambda_*(t)^{-\Theta}
(\lambda_*(t) R(t))^{2\gamma}  \frac {|\nn \psi(x,t) -\nn \psi(x',t') |}{ ( |x-x'|^2 + |t-t'|)^{\gamma   }} ,
\end{align}
where the last supremum is taken in the region
\[
x,x'\in \Omega,\quad  t,t'\in (0,T), \quad |x-x'|\le 2 \la_*R(t), \quad  |t-t'| < \frac 14 (T-t) .
\]
Then we define
\begin{align}
\nonumber
F = \{ \psi \in L^\infty(\DD\times (0,T) ) & : \text{$\psi$ is Lipschitz continuous with respecto to $(r,z)$ in $\DD\times (0,T)$}
\\
\label{space-F}
&  \quad  \text{ and } \|\psi\|_{\sharp,\Theta,\gamma}<\infty\}
\end{align}
with the norm $\| \ \|_{\sharp,\Theta,\gamma}$.

For the functions $\phi_i$ in the inner equations \eqref{eqphi1}--\eqref{eqphi4}
we consider the spaces
\begin{align*}
E_1 &= \{ \phi_1 \in L^\infty(D_{2R}) :
\nabla_y \phi_1  \in L^\infty(D_{2R}), \
\|\phi_1\|_{*,\nu_1,a_1,\delta} <\infty \}
\\
E_2 &= \{ \phi_2 \in L^\infty(D_{2R}) :
\nabla_y \phi_2  \in L^\infty(D_{2R}), \
\|\phi_2\|_{\nu_2,a_2}<\infty \}
\\
E_3 &= \{ \phi_3 \in L^\infty(D_{2R}) :
\nabla_y \phi_3  \in L^\infty(D_{2R}), \
\|\phi_3\|_{**,\nu_3} < \infty \}
\\
E_4 &= \{ \phi_4 \in L^\infty(D_{2R}) :
\nabla_y \phi_4  \in L^\infty(D_{2R}), \
\|\phi_4\|_{***,\nu_4} <\infty \}
\end{align*}
and use the notation
\begin{align*}
E &= E_1\times E_2 \times E_3 \times E_4,
\\
\Phi &= ( \phi_1,\phi_2,\phi_3,\phi_4) \in E
\\
\|\Phi\|_E &=
\|\phi_1\|_{*,\nu_1,a_1,\delta}
+\|\phi_2\|_{\nu_2,a_2-2}
+\|\phi_3\|_{**,\nu_3}
+\|\phi_4\|_{***,\nu_4} .
\end{align*}

\medskip
To introduce the space for the parameter $p$,
we recall the  integral operator $\mathcal B_0$  defined in \eqref{defB0-new}, which has the approximate form
\begin{align*}
\mathcal B_0[p ]
=   \int_{-T} ^{t-\la^2}    \frac{\dot p(s)}{t-s}ds\, + O\big( \|\dot p\|_\infty \big).
\end{align*}
Proposition~\ref{propIntegralOp} gives an approximate inverse $\mathcal P$  of the operator $\mathcal B_0$, so that given $a$ satisfying \eqref{hypA00},  $ p := \mathcal P  \left[ a \right] $,
satisfies the equation
\[
\mathcal B_0[ p ]   = a +\Rem[ a] , \quad \text{in }[0,T],
\]
for a small remainder $\Rem[ a]$.
The proof of that proposition  in \cite{ddw} gives a decomposition
\begin{align}
\label{decompP}
\mathcal P[a] = p_{0,\kappa} + \mathcal P_1[a],
\end{align}
where $p_{0,\kappa}$ is defined by
\begin{align}
\nonumber
p_{0,\kappa}(t) =
\kappa |\log T|
\int_t^T \frac{1}{|\log(T-s)|^2}
\,ds
, \quad  t\leq T  ,
\end{align}
$\kappa = \kappa[a] \in \C$,  and the function $ p_1 = \mathcal P_1[a]$ has the estimate
\[
\| p_1 \|_{*,3-\sigma} \leq C |\log T|^{1-\sigma} \log^2(|\log T|) ,
\]
where $\| \ \|_{*,3-\sigma}$ is defined by
\begin{align}
\nonumber
\|g\|_{*,k} = \sup_{t\in [-T,T]}  |\log(T-t)|^{k} |\dot g(t)|,
\end{align}
and $\sigma \in (0,1)$.
This leads us to define the space
\[
 X_1 := \{  p_1 \in C([-T,T;\C]) \cap C^1([-T,T;\C]) \ | \ p_1(T) = 0 , \ \|p_1\|_{*,3-\sigma}<\infty \} ,
\]
with the norm $ \|p_1\|_{*,3-\sigma} $ and represent $p$ by the pair $(\kappa,p_1)$ in the form $p = p_{0,\kappa} + p_1$.

Finally, for the parameter $\xi$ we denote by $\xi^0$ the explicit function
\[
\xi^0(t) = ( \sqrt{r_0^2 + 2(T-t)} , z_0) , \quad t\in [0,T],
\]
and represent $\xi = \xi^0 + \xi^1$ with $\xi^1$ in the space
\begin{align*}
X_2 &= \{  \xi \in C^{1}([0,T];\R^2) \ : \ \dot \xi(T) = 0\}
\end{align*}
with the norm
\[
\| \xi \|_{X_2} = \|\xi\|_{L^\infty(0,T)} + \sup_{t\in (0,T)} \lambda_*(t)^{-\sigma} |\dot \xi(t)|
\]
where $\sigma \in (0,1)$ is fixed.

\medskip
Let $\mathcal B$ denote  the closed subset of $F \times E \times \C \times X_1 \times X_2$ defined by $ (\psi,\Phi,\kappa, p_1,\xi^1) \in \mathcal B$ if:
\begin{align}
\label{def-B}
\left\{
\begin{aligned}
\|\psi\|_F + \|\Phi\|_E & \leq 1 \\
|\kappa-\kappa_0| &  \leq \frac{1}{|\log T|^{1/2}} \\
\| p_1 \|_{*,3-\sigma } &\leq C_0 |\log T|^{1-\sigma} \log^2(|\log T|)
\\
\|\xi^1\|_{X_2} & \leq 1,
\end{aligned}
\right.
\end{align}
where $\kappa_0 = \div \tilde z_0^*(r_0,z_0) + i \curl  \tilde z_0^*(r_0,z_0) $ and $C_0$ is a large fixed constant.

\medskip
Next we define an operator $ \mathcal A : \mathcal B \to F \times E \times \C \times X_1 \times X_2$ so that a fixed point of it will give a solution to the full system \eqref{eq-psi}--\eqref{1.4}.
This operator is defined by
\begin{align*}
\mathcal A = (\mathcal A_0, \mathcal F, \mathcal K, \tilde{\mathcal P}_1,\mathcal X_1),
\end{align*}
where
\begin{align*}
\mathcal A_0 &:\mathcal B \to F ,
\quad
\mathcal F :\mathcal B \to E ,
\\
\mathcal K &:\mathcal B \to \C
\quad
\tilde{\mathcal P}_1 :\mathcal B \to X_1 , \quad
\mathcal X_1 :\mathcal B \to X_2 ,
\end{align*}
and  where $\mathcal A_0$ will hande \eqref{eq-psi}, $\mathcal F$ is related to \eqref{eqphi1}--\eqref{eqphi4} and $\mathcal K $, $ \tilde{\mathcal P}_1 $, $ \mathcal X_1 $ deal with the equations for $p$ and $\xi$, \eqref{1.3}, \eqref{1.4}.

\medskip

To define $\mathcal A_0$, we need first a linear result about the exterior problem \eqref{eq-psi}.
Thus we consider  the inhomogeneous linear heat equation
\begin{align}
\label{heat-eq0}
\left\{
\begin{aligned}
\psi_t  & = (\partial_r^2+\partial_z^2) \psi
+ \frac{1}{r} \partial_r \psi
+ f(r,z,t) \inn \DD \times (0,T) \\
\psi  & = 0 \onn ( \pp \DD \setminus \{ r=0\} ) \times (0,T) \\
\partial_r \psi &= 0  \onn (  \DD \cap \{ r=0\} ) \times (0,T) \\
\psi(r_0,z_0,T)  & = 0  \\
\ \psi(r,z,0)  & =  (c_1 \,  \mathbf{e_1}  + c_2 \, \mathbf{e_2}  + c_3\,  \mathbf{e_3}) \eta_1    \inn  \DD,
\end{aligned}
\right.
\end{align}
for suitable constants $c_1,c_2,c_3$, where $\mathbf{e_1}$, $\mathbf{e_2}$, $\mathbf{e_3}$ are defined in \eqref{e123},
$(r_0,z_0) \in \DD$, $r_0>0$,  and $T>0$ is sufficiently  small.
The fixed smooth cut-off  $\eta_1$ has compact support in $\DD$ and is such that $\eta_1\equiv 1$ in a small neighborhood of $(r_0,z_0)$.
The right hand side is assumed to satisfy $\| f\|_{**}<\infty$ where
\begin{align}
\nonumber
\|f\|_{**} : =   \sup_{ \DD \times (0,T)}  \Big ( 1 + \sum_{i=1}^3 \varrho_i(r,z,t)\, \Big )^{-1}  {|f(r,z,t)|}  ,
\end{align}
and the weights are defined by
\begin{align*}
\varrho_1 & :=   \lambda_*^{\Theta}  (\lambda_* R)^{-1}  \chi_{ \{ s \leq 3R\la_* \} } ,\quad
\varrho_2  := T^{-\sigma_0}  \frac{\lambda_*^{1-\sigma_0}}{s^2}  \chi_{ \{ s \geq  R\la_* \} } , \quad
\varrho_3  := T^{-\sigma_0} ,
\end{align*}
where $s= |(r,z)-(r_0,z_0)|$, $\Theta>0$ and  $\sigma_0>0$ is  small.
(The factor $T^{\sigma_0}$ in front of $\varrho_2$ and $\varrho_3$ is a simple way to have parts of the error small in the outer problem.)
These weights  naturally adapt to the form of the outer error $g$  in \eqref{GG}.
The next lemma gives a solution to  \eqref{heat-eq0} as a linear operator of $f$.
%
\begin{lemma}
\label{lemma3}
Assume  $ \beta \in ( 0,\frac{1}{2}) $, $ \Theta \in (0,\beta)$.
For $T>0$ small there is a linear operator that maps a function $f:\DD \times (0,T) \to \R^3$ with  $\|f\|_{**}<\infty$ into $\psi$, $c_1,c_2,c_3$ so that \eqref{heat-eq0} is satisfied.
Moreover the following estimate holds
\begin{align}
 \label{estPsi0}
\| \psi\|_{\sharp, \Theta ,\gamma}
+ \frac{\lambda_*(0)^{-\Theta}  ( \lambda_*(0) R(0) )^{-1} }{ |\log T| }( |c_1| +|c_2| +|c_3| )  \leq C \|f\|_{**} ,
\end{align}
where $\gamma\in(0,\frac{1}{2})$.
\end{lemma}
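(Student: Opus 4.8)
The plan is to write the solution as $\psi=\psi_0+\psi_h$, where $\psi_0$ is the forward heat flow generated by $f$ with zero initial data and $\psi_h$ is a correction entering only through the initial datum, designed to enforce the endpoint condition $\psi(r_0,z_0,T)=0$. The first observation is that, acting on axially symmetric functions, the operator $\partial_r^2+\partial_z^2+\frac1r\partial_r$ is the three–dimensional Laplacian $\Delta_x$, that the axis $\{r=0\}\cap\DD$ is an interior set of $\Omega$ on which the Neumann condition in \eqref{heat-eq0} is automatic for smooth axisymmetric functions, and that $\partial\Omega$ corresponds to $\partial\DD\setminus\{r=0\}$. Thus \eqref{heat-eq0} is, under this identification, the Dirichlet heat equation on the bounded smooth domain $\Omega\subset\R^3$, and one may work with the heat semigroup $e^{t\Delta_x}$ with zero boundary values on $\partial\Omega$.

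For the particular solution I would take $\psi_0(\cdot,t)=\int_0^t e^{(t-s)\Delta_x}\big(f(\cdot,s)\big)\,ds$. The heart of the proof is the weighted a priori bound $\|\psi_0\|_{\sharp,\Theta,\gamma}\le C\|f\|_{**}$, proved as the analogous estimate for the planar exterior problem in \cite{ddw}: one inserts the Gaussian upper bound for the Dirichlet heat kernel of $\Omega$ into the Duhamel formula and tests against each of the three weights $\varrho_1,\varrho_2,\varrho_3$ that define $\|f\|_{**}$. The constant weight $\varrho_3=T^{-\sigma_0}$ produces an $O(T^{1-\sigma_0})$ contribution; the outer weight $\varrho_2=T^{-\sigma_0}\lambda_*^{1-\sigma_0}s^{-2}\chi_{\{s\ge R\lambda_*\}}$, after integrating $s^{-2}$ against the kernel and using $\lambda_*(t)\sim (T-t)/|\log(T-t)|$, also produces an $O(T^{1-\sigma_0})$ contribution up to logarithms; and the inner weight $\varrho_1=\lambda_*^{\Theta}(\lambda_* R)^{-1}\chi_{\{s\le 3R\lambda_*\}}$, supported on a ball of radius $R\lambda_*=\lambda_*^{1-\beta}\ll\lambda_*^{1/2}$ (this is where $\beta<\frac12$ is used), produces a contribution $O(T^{\,\Theta+3/2-2\beta})$ up to logarithms. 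The gradient bound, the interior parabolic Hölder seminorm at the scale $|x-x'|\le 2\lambda_*R$, and the finite differences $\psi_0(x,t)-\psi_0(x,T)$ are controlled by the same kernel estimates together with interior scaled Schauder estimates, exactly as in \cite{ddw}; this step contains essentially all of the work, and since everything below is linear in $f$ once $\psi_0$ is, it also supplies the required linearity.

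Next I would solve the endpoint condition. For $f\equiv0$ and initial datum $(c_1{\bf e}_1+c_2{\bf e}_2+c_3{\bf e}_3)\eta_1$ the heat equation decouples componentwise, so the solution is $\psi_h=(c_1w_*,c_2w_*,c_3w_*)$, where $w_*$ is the scalar Dirichlet heat flow on $\Omega$ with initial datum $\eta_1$. Because $\eta_1\equiv1$ on a neighbourhood of $(r_0,z_0)$ independent of $T$, continuity of the heat semigroup gives $w_*(r_0,z_0,T)\to1$ as $T\to0$, hence $w_*(r_0,z_0,T)\ge\frac12$ for $T$ small. The endpoint condition $\psi_0(r_0,z_0,T)+w_*(r_0,z_0,T)(c_1,c_2,c_3)=0$ then has the explicit solution $(c_1,c_2,c_3)=-w_*(r_0,z_0,T)^{-1}\psi_0(r_0,z_0,T)$, linear in $f$, and $\psi=\psi_0+\psi_h$ solves \eqref{heat-eq0}. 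Evaluating the estimate of the previous step at $(r_0,z_0,T)$ gives $|\psi_0(r_0,z_0,T)|\le C\big(T^{1-\sigma_0}+T^{\,\Theta+3/2-2\beta}\big)\|f\|_{**}$ up to logarithms, hence $|c_j|\le C\,T^{\min(1-\sigma_0,\,\Theta+3/2-2\beta)}\|f\|_{**}$; a direct computation using $\lambda_*(0)\sim T/|\log T|$, $R(0)=\lambda_*(0)^{-\beta}$, $\Theta<\beta<\frac12$ and $\sigma_0$ small then shows $\frac{\lambda_*(0)^{-\Theta}(\lambda_*(0)R(0))^{-1}}{|\log T|}|c_j|\le C\|f\|_{**}$, which is the Lagrange–multiplier part of \eqref{estPsi0}. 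Finally $\psi_h$ is built from the smooth bounded function $w_*$, whose time derivative is bounded, so that $w_*(x,t)-w_*(x,T)=O(T-t)$ is readily dominated by the weights $\lambda_*^{\Theta+1}R\,|\log(T-t)|$ and $\lambda_*^{\Theta}$ (again because $\Theta<\beta$), giving $\|\psi_h\|_{\sharp,\Theta,\gamma}\le C\sum_j|c_j|\le C\|f\|_{**}$. Adding the two estimates yields \eqref{estPsi0}.

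The main obstacle is entirely contained in the weighted bound of the second step: matching the precise spatial decay $\varrho_1,\varrho_2,\varrho_3$ of $f$ to the somewhat delicate norm $\|\cdot\|_{\sharp,\Theta,\gamma}$ — in particular the interior Hölder piece at the parabolic scale $\lambda_*R$ near the moving singular set and the quantities $\psi(x,t)-\psi(x,T)$ — requires the careful Duhamel bookkeeping of \cite{ddw}. Once \eqref{heat-eq0} is recognised as the three–dimensional Dirichlet heat equation on $\Omega$, that analysis transfers with only notational changes, and the rest of the argument — the componentwise decoupling and the scalar invertibility $w_*(r_0,z_0,T)\neq0$ — is elementary.
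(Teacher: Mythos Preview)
Your argument is correct and shares the overall two-step architecture with the paper --- first solve the forced problem with zero initial data, then fix the constants $c_j$ via the scalar heat flow of $\eta_1$ to enforce $\psi(r_0,z_0,T)=0$ --- but the two diverge in how the forced piece is obtained. You recognise $\partial_r^2+\partial_z^2+\tfrac1r\partial_r$ as the three-dimensional Laplacian on axisymmetric functions and run Duhamel directly with the Dirichlet heat semigroup on $\Omega\subset\R^3$, then argue that the weighted kernel estimates of \cite{ddw} carry over. The paper instead takes as its primitive the purely \emph{two-dimensional} heat equation $\partial_t\psi=(\partial_r^2+\partial_z^2)\psi+f$ on all of $\R^2$, with no $\tfrac1r\partial_r$ term, for which the bound $\|\psi[f]\|_{\sharp,\Theta,\gamma}\le C\|f\|_{**}$ is literally Lemmas~8.1--8.3 of \cite{ddw}; it then writes the solution of the zero-initial-data problem as $\eta_1\psi[f]+\tilde\psi_1$, so that the correction $\tilde\psi_1$ carries only the forcing $\tfrac1r\partial_r(\eta_1\psi[f])$ plus the commutator $(\partial_r^2+\partial_z^2)\eta_1\,\psi[f]+2\nabla\eta_1\cdot\nabla\psi[f]$, all supported away from $r=0$ and already controlled by $\|\psi[f]\|_{\sharp,\Theta,\gamma}$, and then treats $\tilde\psi_1$ as a genuinely three-dimensional but benign problem on $\Omega$. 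Your route is conceptually tidier (one semigroup, no cutoff decomposition), but the claim that the \cite{ddw} analysis transfers ``with only notational changes'' is a little optimistic: the three-dimensional kernel scales as $(t-s)^{-3/2}$ rather than $(t-s)^{-1}$, and one must either redo the Duhamel bookkeeping or observe that the angular average of the Gaussian near the circle $r=r_0>0$ reproduces the planar kernel up to a bounded factor $\sqrt{r'/r}$. The paper's route buys a black-box citation of \cite{ddw} for the delicate weighted estimate and isolates the new term $\tfrac1r\partial_r$ as a lower-order perturbation handled separately.
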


\begin{proof} 

We use Lemmas~8.1--8.3 in \cite{ddw}, which put together, can be summarized as follows.

\medskip
{\em
Assume $ \beta \in ( 0,\frac{1}{2}) $, $ \Theta \in (0,\beta)$.
If  $f:\R^2 \times (0,T) \to \R^3$ satisfies  $\|f\|_{**}<\infty$, the solution $\psi$ to
\begin{align}
\label{heat-eq-r2}
\partial_t \psi = ( \partial_r^2 + \partial_z^2)\psi + f(r,z) \quad \text{in }\R^2
\end{align}
given by Duhamel's formula satisfies
\begin{align}
\nonumber
\| \psi\|_{\sharp, \Theta ,\gamma}  \leq C \|f\|_{**} ,
\end{align}
where $\gamma\in(0,\frac{1}{2})$.
}

\medskip
To obtain Lemma~\ref{lemma3} from the above statement we first show that the solution to
\begin{align}
\label{heat-eq0b}
\left\{
\begin{aligned}
\psi_t  & = (\partial_r^2+\partial_z^2) \psi
+ \frac{1}{r} \partial_r \psi
+ f(r,z,t) \inn \DD \times (0,T) \\
\psi  & = 0 \onn ( \pp \DD \setminus \{ r=0\} ) \times (0,T) \\
\partial_r \psi &= 0  \onn (  \DD \cap \{ r=0\} ) \times (0,T) \\
\ \psi(r,z,0)  & =  0   \inn  \DD,
\end{aligned}
\right.
\end{align}
satisfies
\begin{align}
\label{est-psi-ini0}
\| \psi\|_{\sharp, \Theta ,\gamma}  \leq C \|f\|_{**} .
\end{align}
Indeed, let  $\psi[f]$ be the solution of \eqref{heat-eq-r2} given by Duhamel's formula.
We then rewrite the solution $\psi$ of \eqref{heat-eq0b} as $\psi=\eta  \psi[f] + \tilde \psi_1$ where $\eta_1$ is as before.
Then $\tilde \psi_1$  satisfies
\begin{align}
\label{heat-eq0c}
\left\{
\begin{aligned}
\partial_t\tilde\psi_1  & = (\partial_r^2+\partial_z^2) \tilde\psi _1
+ \frac{1}{r} \partial_r \tilde\psi_1
+ \frac{1}{r} \partial_r (\eta_1 \psi[f] )
+ (\partial_r^2+\partial_z^2) \eta_1 \psi[f] +  2\nabla \eta_1\nabla \psi[f]
 \inn \DD \times (0,T) \\
\tilde\psi_1  & = -\psi[f] \onn ( \pp \DD \setminus \{ r=0\} ) \times (0,T) \\
\partial_r \tilde\psi_1 &= 0  \onn (  \DD \cap \{ r=0\} ) \times (0,T) \\
\tilde \psi_1(r,z,0)  & =  0   \inn  \DD,
\end{aligned}
\right.
\end{align}
The function $\tilde \psi_1$ can be regarded as $\tilde \psi_1(r,z,t) = \psi_1(x,t)$ where $\psi_1$ solves a non-homogeneous problem in the three dimensional axially symmetric domain $\Omega$. The estimate $\| \psi[f] \|_{\sharp,\Theta,\gamma} \leq \|f\|_{**}$ gives sufficient control of the terms involving $\psi[f] $ in \eqref{heat-eq0c} so that for $\tilde \psi_1$ we also obtain  $\| \tilde \psi_1 \|_{\sharp,\Theta,\gamma} \leq \|f\|_{**}$.
This proves \eqref{est-psi-ini0}.
Finally, using \eqref{est-psi-ini0} one can show that for the problem \eqref{heat-eq0} there are choices of $c_i$ so that $\psi(r_0,z_0,T)=0$, and these constants satisfy \eqref{estPsi0}.
\end{proof}


Let $\psi = \mathcal U(f)$ be  the operator constructed in Lemma~\ref{lemma3} and  set
\begin{align}
\nonumber
\tilde  g[p,\xi, \Psi^*,\phi]  & := g[p,\xi, \Psi^*,\phi] - \frac{1}{r} \partial_r \psi
\end{align}
with $g$ defined in \eqref{GG}.
We then define
\begin{align}
\label{def-A0}
\mathcal A_0(\psi,\Phi,\kappa, p_1,\xi^1)
= \mathcal U ( \tilde g[p_{0,\kappa}+p_1,\xi^0+\xi^1,Z^* + \psi,\phi] )
\end{align}
where $\phi = \phi_1+\phi_2+\phi_3+\phi_4$ and $\Phi= (\phi_1,\ldots,\phi_4)$.

\medskip

Next we define
\begin{align}
\label{def-F}
\mathcal F (\psi,\Phi,\kappa, p_1,\xi^1) = ( \mathcal F_1(\psi,\Phi,\kappa, p_1,\xi^1) ,  \mathcal F_2(\psi,\Phi,\kappa, p_1,\xi^1) ,
\mathcal F_3(\psi,\Phi,\kappa, p_1,\xi^1)  ,
\mathcal F_4(\psi,\Phi,\kappa, p_1,\xi^1) )
\end{align}
where
\begin{align*}
\mathcal F_1(\psi,\Phi,\kappa, p_1,\xi^1) &=   \mathcal T_{\lambda,1}  (
h_1[p,\xi,  \Psi^* ]  )
\\
\mathcal F_2(\psi,\Phi,\kappa, p_1,\xi^1)&=   \mathcal T_{\lambda,2}  (
h_2[p,\xi,  \Psi^* ] )
\\
\mathcal F_3(\psi,\Phi,\kappa, p_1,\xi^1) &=   \TT_{\lambda,3 }
\Bigl(
h_3[p,\xi,  \Psi^* ]+
\sum_{j=1}^2 c_{0j}^*[p,\xi, \Psi^* ] w_\rho^2 Z_{0j}
\Bigr)
\\
\mathcal F_4(\psi,\Phi,\kappa, p_1,\xi^1)&=   \TT_{\lambda,4 } \Bigl( \sum_{j=1}^2 c_{-1,j}[h_1[p,\xi, \Psi^* ]] w_\rho^2 Z_{-1,j}  \Bigr)	  ,
\end{align*}
where $p = p_{0,\kappa}+p_1$, $\xi= \xi^0+\xi^1$, $\Psi^*  = Z^* + \psi$.

\medskip

To define the operators $\mathcal K$ and $\tilde{\mathcal P}_1$,
we recall that Proposition~\ref{propIntegralOp} gives the decomposition
\eqref{decompP} where $\kappa= \kappa[a]$ and $p_1 = \mathcal P_1[a]$.
We define
\begin{align}
\label{def-K}
\mathcal K(\psi,\Phi,\kappa, p_1,\xi^1)
& = \kappa  \left[ a_0^{(0)}[p,\xi,\Psi^*]\right]
\\
\label{def-tilde-P1}
\tilde{\mathcal P_1} (\psi,\Phi,\kappa, p_1,\xi^1)
&=\mathcal P_1  \left[ a_0^{(0)}[p,\xi,\Psi^*]\right] ,
\end{align}
where, again, $p = p_{0,\kappa}+p_1$, $\xi= \xi^0+\xi^1$, $\Psi^*  = Z^* + \psi$.

\medskip

Finally, we introduce the operator $\mathcal X_1$.
By \eqref{defCij}, \eqref{1.4} is equivalent to
\begin{align*}
\int_{\R^2}
h[p,\xi, \Psi^*]
\cdot Z_{1j}(y)\, dy =0 , \quad t\in (0,T), \ j=1,2,
\end{align*}
and recalling \eqref{HH2}, this is equivalent to
\begin{align}
\nonumber
\dot \xi_j =
- \frac{1}{4\pi}(1+(2R)^{-2})
\int_{B_{2R}} Q_{-\omega} \Bigl(  \tilde L_U[\Psi^*]
+ \frac{1}{r}\partial_r U  \Bigr)
\cdot Z_{1j},
\quad j=1,2.
\end{align}
Then we define
\begin{align}
\label{def-X1}
\mathcal X_1 (\psi,\Phi,\kappa, p_1,\xi^1)
=
(r_0,z_0) +  \int_t^T b(\psi,\Phi,\kappa, p_1,\xi^1) (s) \,ds
\end{align}
with
\begin{align*}
b_{11}(\psi,\Phi,\kappa, p_1,\xi^1)(t)&=
 \frac{1}{4\pi}(1+(2R)^{-2})
 \int_{B_{2R}} Q_{-\omega} \Bigl(  \tilde L_U[\Psi^*]
 + \frac{1}{r}\partial_r U  \Bigr) \cdot Z_{1j}  - \frac{1}{\xi^0_1(t)} \\
b_{12}(\psi,\Phi,\kappa, p_1,\xi^1)(t)&=
 \frac{1}{4\pi}(1+(2R)^{-2})
 \int_{B_{2R}} Q_{-\omega} \Bigl(  \tilde L_U[\Psi^*]
 + \frac{1}{r}\partial_r U  \Bigr) \cdot Z_{1j}  .
\end{align*}

\subsection{Choice of constants}
\label{constants}
We state here the constraints we impose in the parameters involved in the different norms. The values assumed will be sufficient
for the inner-outer gluing scheme to work.

\medskip
\begin{itemize}
\item
$\beta \in (0,\frac 12)$ is so that  $ R(t) = \la_*(t)^{-\beta}$.

\item
$\alpha \in (0,\frac{1}{2})$ appears in Proposition~\ref{propIntegralOp}.
It is the parameter used to define the remainder $\mathcal R_\alpha$ in \eqref{defRem}.

\item
We use the norm $\| \ \|_{*,\nu_1,a_1,\delta}$ \eqref{norm-phi1} to measure the solution $\phi_1$ in \eqref{eqphi1}. Here we will ask that $
\nu_1 \in (0,1)$, $ a_1 \in (2,3)$,  and $\delta > 0$ small and fixed.

\item
We use the norm $\| \ \|_{\nu_2,a_2-2}$ \eqref{norm-h} to measure the solution $\phi_2$ in \eqref{eqphi2}, with   $\nu_2 \in (0,1) $, $a_2 \in (2,3)$.

\item
We use the norm $\| \ \|_{**,\nu_3}$ \eqref{norm-starstar} for the solution $\phi_3$ of \eqref{eqphi3}, with $\nu_3>0$.

\item We use the norm $\| \ \|_{***,\nu_4}$ for the solution $\phi_4$ of \eqref{eqphi4}, with $\nu_4>0$.

\item
We are going to use the norm $\| \ \|_{\sharp,\Theta,\gamma}$ with a parameters $\Theta$, $\gamma$ satisfying some restrictions given below.

\item
We have parameters $m$, $l$ in Proposition~\ref{propIntegralOp}.
We work with $m$ given by
\begin{align*}
 m= \Theta -2\gamma(1-\beta).
\end{align*}
and $l$ satisfying $l<1+2m$.
\end{itemize}

We will assume that
\[
\alpha-1+2\beta>0
\]
which ensures that $m+(1+\alpha)\gamma > \Theta$.

To get the estimates for the outer problem \eqref{eq-psi}, we  need $\beta \in ( 0,\frac{1}{2} )$  , $
\Theta \in
(0,\beta) $
and
\begin{align*}
\Theta <   \min \Bigl( \beta , \frac{1}{2} - \beta ,
\nu_1-1+\beta(a_1-1) ,  \nu_2-1+\beta(a_2-1) , \nu_3-1,\nu_4-1+\beta  \Bigr)
\end{align*}
\begin{align*}
\Theta < \min
\Bigl(  \nu_1 - \delta \beta (5-a_1) -\beta , \nu_2 - \beta , \nu_3-3\beta , \nu_4 - \beta  \Bigr) .
\end{align*}
Also to control the nonlinear terms in \eqref{eq-psi} we need $\delta>0$ in  $\| \ \|_{*,\nu_1,a_1,\delta}$ to be small.
%
To find $\Theta$ in the range above we need
\begin{align*}
\nonumber
\nu_1 & > \max\bigl(1-\beta (a_1-1),\delta  \beta (5-a_1) - \beta \bigr)
, &
\nu_2 & >  \max\bigl(1-\beta (a_2-1), \beta \bigr) ,
\\
\nu_3 & > \max ( 1 , 3\beta) , &
\nu_4 & > \max(1-\beta,\beta) .
\end{align*}

To solve the inner system given by equations
\eqref{eqphi1},
\eqref{eqphi2},
\eqref{eqphi3}, and
\eqref{eqphi4}
we will need
\begin{align}
\nonumber
\nu_1 & < 1 , &
\nonumber
\nu_2 & < 1-\beta(a_2-2) ,  \\
\nonumber
\nu_3 &<  \min\bigl(
1+\Theta+\sigma_1  ,
1+\Theta + 2\gamma \beta ,
\nu_1 + \frac{1}{2} \delta \beta ( a_1-2 ) \bigr) ,
&
\nonumber
\nu_4 & < 1,
\end{align}
where $\sigma_1  \in (0,\gamma(\alpha-1+2\beta))$.

\subsection {The proof of Theorem~\ref{teo1}}
Let us consider the operator
\be
\begin{aligned}
\mathcal A = (\mathcal A_0,\mathcal F,\mathcal K,\tilde{\mathcal P}_1,\mathcal X_1)
\end{aligned}\label{operador1}\ee
where $\mathcal A_0$, $\mathcal F$, $\mathcal K$, $\tilde{\mathcal P}_1$, $\mathcal X_1$ are given in \eqref{def-A0}, \eqref{def-F}, \eqref{def-K}, \eqref{def-tilde-P1}, \eqref{def-X1}.

The proof of Theorem~\ref{teo1} consists in showing that $\mathcal A:\mathcal B \subset F \times E \times \C \times X_1 \times X_2 \to F \times E \times \C \times X_1 \times X_2$ has a fixed point, where $\mathcal B$ is defined by \eqref{def-B}.
We do this using the Schauder fixed point theorem. The estimates needed to show that $\mathcal A$ maps $\mathcal B$ into itself and the compactness are obtained in a similar way.
They are based on the following estimates for the operators
e $\mathcal A_0$, $\mathcal F$, $\mathcal K$, $\tilde{\mathcal P}_1$, $\mathcal X_1$. We claim that if $(\psi,\Phi,\kappa,p_1,\xi^1) \in \mathcal B$ then
\begin{align}
\label{estimates}
\left\{
\begin{aligned}
\|  \mathcal A_0(\psi,\Phi,\kappa,p_1,\xi^1) \|_{\sharp,\Theta,\gamma}
& \leq C T^\sigma
\\
\| \mathcal F (\psi,\Phi,\kappa,p_1,\xi^1) \|_E
& \leq C T^\sigma
\\
|\mathcal K  (\psi,\Phi,\kappa,p_1,\xi^1) - \kappa_0| & \leq \frac{C}{|\log T|}
\\
\| \tilde{\mathcal P}_1(\psi,\Phi,\kappa,p_1,\xi^1) \|_{*,3-\sigma} & \leq C
|\log T|^{1-\sigma} \log^2( |\log T| )
\\
\| \mathcal X_1(\psi,\Phi,\kappa,p_1,\xi^1) \|_{X_2} & \leq C T^\sigma.
\end{aligned}
\right.
\end{align}

We give below the proof of some of the estimates stated above.
We first show that for $(\psi,\Phi,\kappa,p_1,\xi^1)\in \mathcal B$,
\begin{align}
\nonumber
\|  \mathcal A_0(\psi,\Phi,\kappa,p_1,\xi^1) \|_{\sharp,\Theta,\gamma}
\leq C T^\sigma .
\end{align}
For the proof let us write
$ \tilde  g = g_1 + g_2 + g_3 + g_4 + g_5  $ where
\begin{align*}
g_1 & =
Q_\omega
\bigl( ((\partial_r^2+\partial_z^2) \eta) \phi + 2  \nn \eta \nn \phi - \eta_t  \phi
\bigr)
\\
& \quad
+ \eta Q_\omega\bigl( - \dot\omega J \phi  +  \la^{-1}\dot\la  y\cdot \nn_y \phi + \la^{-1} \dot\xi \cdot\nn_y \phi \bigr)
\\
g_2
& =  (1-\eta) \ttt L_U [\Psi^*] + (\Psi^*\cdot U ) U_t
\\
g_3 & =
(1-\eta)[ \KK_{0}[p,\xi]+ \KK_{1}[p,\xi]] + \Pi_{U^\perp}[ \ttt \RR_1] + ( \Phi^0\cdot U)U_t ,
\\
g_4 & = N_U( \eta Q_\omega \phi  + \Pi_{U^\perp}( \Phi^0  + \Psi)^* ) \\
g_5&=\frac{1}{r} \partial_r
\left( \Pi_{U^\perp} \big( \eta^\delta\, \Phi^0[\omega, \la , \xi] + Z^*\big ) + \eta_R Q_\omega \phi
\right)
+ (1-\eta) \frac{1}{r} \partial_r U + \eta^\delta \EE^{out,1} + \EE^{out,0} .
\end{align*}
We claim that
\begin{align}
\nonumber
\|g_1\|_{**} \leq C  T^\sigma  \|\Phi\|_E ,
\end{align}
for some $\sigma>0$.
Indeed, we have
\begin{align*}
| (\partial_r^2+\partial_z^2) \eta \phi_1  |
&\leq C \lambda_*^{\nu_1-2} R^{-a_1} \chi_{[ |x-q| \leq 3 \lambda_* R]}
\|\phi_1\|_{*,\nu_1,a_1,\delta}
\\
| (\partial_r^2+\partial_z^2) \eta \phi_2  |
&\leq C \lambda_*^{\nu_2-2} R^{-a_2} \chi_{[ |x-q| \leq 3 \lambda_* R]}
\|\phi_2\|_{\nu_2,a_2-2}
\\
| (\partial_r^2+\partial_z^2) \eta \phi_3  |
&\leq C \lambda_*^{\nu_3-2} R^{-1} \chi_{[ |x-q| \leq 3 \lambda_* R]}
\|\phi_3\|_{**,\nu_3}
\\
| (\partial_r^2+\partial_z^2) \eta \phi_4  |
&\leq C \lambda_*^{\nu_4-2} R^{-2} \log R \chi_{[ |x-q| \leq 3 \lambda_* R]}
\|\phi_4\|_{***,\nu_4}  .
\end{align*}
If
\begin{align*}
\Theta < \min( \nu_1-1+\beta(a_1-1) ,  \nu_2-1+\beta(a_2-1) , \nu_3-1,\nu_4-1+\beta ) ,
\end{align*}
we find that  for any $j=1,2,3,4$:
\begin{align*}
| \phi_j (\partial_r^2+\partial_z^2) \eta |\leq C T^\sigma \lambda_*^{\Theta-1+\beta}
\chi_{[ |x-q| \leq 3 \lambda_* R]}
\| \Phi \|_E,
\end{align*}
for some $\sigma>0$.
Then we have
\[
\| Q_\omega  ((\partial_r^2+\partial_z^2) \eta) \phi  \|_{**} \leq C
T^\sigma
\|\Phi\|_E
\]
and similarly
\[
\| ( \partial_t \eta)  Q_\omega \phi \|_{**}
+\|   Q_\omega  \la^{-1} \nabla\eta\nabla_y \phi   \|_{**}
\leq  C T^\sigma
\|\Phi\|_E .
\]

The other terms $g_2$, $g_3$, $g_4$, $g_5$ can be estimated in the same way.
In the estimate for $g_2$ it is important to have the property that $\Psi^* = Z^* + \psi$ vanishes at $(r_0,z_0,T)$.

\medskip

Next we estimate  the operator $\mathcal F_1$. The other operators $\mathcal F_2,\ldots,\mathcal F_4$ are handled similarly.
We claim that for  $(\psi,\Phi,\kappa,p_1,\xi^1)\in \mathcal B$,  we have
\begin{align}
\label{estF1-1}
\|  \mathcal F_1(\psi,\Phi,\kappa,p_1,\xi^1)  \|_{*,a,_1,\nu_1}
&  \leq C \lambda_*(0)^{\sigma}
( \| \psi \|_{\sharp,\Theta,\gamma}
+ \| \dot p \|_{L^\infty(-T,T)}
+ \|  Z_0 \|_{C^2} ) .
\end{align}
Indeed,  by Proposition~\ref{prop1.0} we have
\begin{align}
\label{from-prop1.0}
\|  \mathcal F_1(\Phi)  \|_{*,\nu_1,a_1,\delta}
& \leq C \| h_1[p,\xi,  \Psi^* ] \|_{\nu_1,a_1}.
\end{align}
From the definition of $h_1$ \eqref{def-h1} and recalling that
$
\Psi^* = Z^* +  \psi
$
we get
\begin{align*}
&
\| h_1[p,\xi,  \Psi^* ] \|_{\nu_1,a_1}
\\
& \quad \leq
\| \lambda^2  Q_{-\omega}   (\tilde L_U  [ \psi ]_0
+\tilde L_U  [ \psi ]_2  ) \chi_{D_{2R} }  \|_{\nu_1,a_1}
+\| \lambda^2  Q_{-\omega}
 (\tilde L_U  [ Z^* ]_0+\tilde L_U  [ Z^* ]_2  )
 \chi_{D_{2R} }  \|_{\nu_1,a_1}
\\
& \quad \quad
+  \|  \lambda^2  Q_{-\omega}  \KK_{0}[p,\xi] \|_{\nu_1,a_1} .
\end{align*}
We claim that for $j=0$ and $j=2$:
\begin{align}
\label{phi1RHS1-0}
\| \lambda^2  Q_{-\omega} \tilde  L_U  [ \psi ]_j \, \chi_{D_{2R} }
 \|_{\nu_1,a_1}
&\leq
C T^\sigma
\lambda_*(0)^\Theta \| \psi \|_{\sharp,\Theta,\gamma}
\end{align}
Indeed,  from \eqref{Ltilde-j} we get, for $j=0$ and $j=2$:
\begin{align*}
| \lambda^2  Q_{-\omega} \tilde  L_U  [ \psi ]_j|
& \leq C \frac{\lambda_*}{(1+|y|)^3} \|\nabla \psi  \|_{L^\infty} .
\end{align*}
We use  $ \nu_1 <1 $  and $a_1<3$ to estimate for $|y|\leq 2 R$
\begin{align}
\nonumber
\frac{\lambda_*}{(1+|y|)^3}
& \leq
\frac{\lambda_*^{\nu_1}}{(1+|y|)^{a_1}}  \lambda_*(0)^{1-\nu_1}  .
\end{align}
Then for $|y|\leq 2 R$ and $j=0,2$:
\begin{align*}
| \lambda^2  Q_{-\omega} \tilde  L_U  [ \psi ]_j |
& \leq C \frac{\lambda_*^{\nu_1}}{(1+|y|)^{a_1}}
\lambda_*(0)^{1-\nu_1}
\|\nabla \psi \|_{L^\infty}
 \leq C \frac{\lambda_*^{\nu_1}}{(1+|y|)^{a_1}}
\lambda_*(0)^{1-\nu_1}
\lambda_*(0)^\Theta \| \psi \|_{\sharp,\Theta,\gamma},
\end{align*}
and  \eqref{phi1RHS1-0} follwos.
Next we claim that
\begin{align}
\label{phi1RHS2}
\| \lambda^2  Q_{-\omega} \tilde  L_U  [Z^* ]_j \chi_{D_{2R} }  \|_{\nu_1,a_1}
\leq
C T^\sigma \| Z_0 \|_{C^2},
\end{align}
for $j=0,2$ and some $\sigma>0$.
Indeed, we use the assumption \eqref{condZ0} and standard estimates for the heat equation to obtain for $j=0,2$:
\begin{align*}
|  \la^2  Q_{-\omega}\ttt L_U  [ Z^* ]_j \,  \chi_{D_{2R} } |
\leq C \frac{\lambda_*}{(1+\rho)^3}
\| Z_0 \|_{C^2(\overline \Omega)} ,
\end{align*}
Since $\nu_1<1$, we get
\begin{align*}
\|  \la^2  Q_{-\omega}\ttt L_U  [  Z^*    ]_j \, \chi_{D_{2R} }  \|_{\nu_1,a_1}
& \leq C \lambda_*(0)^{1-\nu_1} \| Z_0 \|_{C^2(\overline \Omega)} .
\end{align*}
This implies \eqref{phi1RHS2}.
Next we estimate $ \lambda^2  Q_{-\omega} \KK_{0}[p,\xi] $.
We claim that
\begin{align}
\label{phi1RHS3}
\|  \lambda^2  Q_{-\omega}  \KK_{0}[p,\xi] \|_{\nu_1,a_1}
& \leq C T^\sigma \| \dot p \|_{L^\infty(-T,T)}.
\end{align}
Indeed, consider $\KK_{01}$ given in \eqref{K01}.
We have
\begin{align*}
|\lambda^2 Q_{-\omega} \KK_{01}[p,\xi] |
&
\leq C \frac{\lambda_*}{(1+\rho)^3} \int_{-T} ^t  |  \dot p(s)    k(z,t-s) | \, ds .
\end{align*}
A direct computation shows that
\begin{align*}
\|  \la^2  Q_{-\omega} \tilde  L_U  [  \KK_{01}[p,\xi]   ] \chi_{D_{2R} }  \|_{\nu_1,a_1}
& \leq C \lambda_*(0)^{1-\nu_1} \| \dot p \|_{L^\infty(-T,T)}
\\
& \leq CT^\sigma \| \dot p \|_{L^\infty(-T,T)} ,
\end{align*}
for some $\sigma>0$.
The estimate for $\KK_{02}$ is similar, and we obtain \eqref{phi1RHS3}.
Combining \eqref{phi1RHS1-0},  \eqref{phi1RHS2},  and \eqref{phi1RHS3} we finally obtain
\begin{align}
\nonumber
& \| h_1[p,\xi,  \Psi^* ] \|_{\nu_1,a_1}
\leq C T^\sigma( \| \psi \|_{\sharp,\Theta,\gamma} + \| \dot p \|_{L^\infty(-T,T)}
+  \| Z_0^* \|_{C^2}  ) ,
\end{align}
and combining with \eqref{from-prop1.0} we get \eqref{estF1-1}.  

Compactness of the operator $\mathcal A $ in \equ{operador1} is proved using suitable variants of \eqref{estimates}.
Indeed, the previous computations show that if we vary the parameters $\Theta,\gamma,\nu_j,a_j,\delta, \sigma$  of the norms  slightly, so that the restrictions in \S \ref{constants} are kept, then we still obtain \eqref{estimates} where the norms in the left hand side are defined with the new parameters while $\mathcal B$ is defined with the old parameters.
More precisely, one can show, for example, that if  $\Theta',\gamma'$ are fixed close to $\Theta,\gamma$, then for $(\psi,\Phi,\kappa,p_1,\xi^1)\in \mathcal B$ (this set defined still with $\Theta, \gamma, \ldots$) we get
\begin{align}
\nonumber
\|  \mathcal A_0(\psi,\Phi,\kappa,p_1,\xi^1) \|_{\sharp,\Theta',\gamma'}
\leq C T^\sigma ,
\end{align}
(for a possibly  different $\sigma>0$).
Then one proves that if $\gamma<\gamma'$, $\Theta'-\Theta>2(\gamma'-\gamma)$ one has a compact embedding in the sense that if  $(\psi_n)_n$ is a bounded sequence in the norm $\| \ \|_{\sharp,\Theta',\gamma'} $, then for a subsequence it converges in the norm  $\| \ \|_{\sharp,\Theta,\gamma} $.
This compact embedding is a direct consequence of a standard diagonal argument using Ascoli's theorem, and examining the estimates  for a uniform smallness control of its values near time $T$. 
Similar statements hold for the other components $\Phi,\kappa,p_1,\xi^1$.
The proof is concluded.  \qed

\medskip

\noindent
{\bf Acknowledgements:}

The  research  of J.~Wei is partially supported by NSERC of Canada. J.~D\'avila and M.~del Pino have been supported by grants Fondecyt  1130360,  1150066, Fondo Basal CMM (AFB170001).

\end{document}